\documentclass[12pt,twoside]{amsart}
\usepackage[dvipsnames]{xcolor}
\usepackage{fancyhdr}
\usepackage{amsmath,amsfonts,amsbsy,amsgen,amscd,mathrsfs,amssymb,amsthm}
\usepackage{pifont}
\usepackage{mathtools}

\usepackage{xcolor} %
\definecolor{cit}{rgb}{0.91,0.39,0.16}	%
\definecolor{dark-gray}{gray}{0.3}
\definecolor{dkgray}{rgb}{.3,.3,.3}
\definecolor{medgray}{rgb}{.5,.5,.5}
\definecolor{ltgray}{rgb}{.7,.7,.7}
\definecolor{dkblue}{rgb}{0,0,.5}
\definecolor{medblue}{rgb}{0,0,.75}
\definecolor{ltblue}{rgb}{0.97,0.97,1}
\definecolor{rust}{rgb}{0.5,0.1,0.1}
\definecolor{ltyellow}{rgb}{1, 1, 0.9}

\usepackage[style=alphabetic, citestyle=alphabetic,
giveninits=true,
sorting=nyt, sortcites=false,
autopunct=true, autolang=hyphen,
hyperref=true,
abbreviate=false,
backref=false,
backend=bibtex
]{biblatex}
\defbibheading{bibempty}{}

\renewbibmacro{in:}{%
  \ifentrytype{article}{}{\printtext{\bibstring{in}\intitlepunct}}}

\AtEveryBibitem{%
	\clearfield{issn}%
	\clearfield{isbn}%
	\clearlist{location}%
	\ifentrytype{book}%
		{%
		\clearfield{series}%
		\clearfield{volume}%
		\clearfield{pages}}%
		{%
		\clearname{editor}}%
}

\renewbibmacro*{doi+eprint+url}{%
  \newunit\newblock
  \iftoggle{bbx:doi}
    {\printfield{doi}}
    {}%
  \iftoggle{bbx:eprint}
    {\iffieldundef{doi}{\usebibmacro{eprint}}{}}
    {}%
  \newunit\newblock
  \iftoggle{bbx:url}
    {\iffieldundef{doi}{\iffieldundef{eprint}{\usebibmacro{url}}}} %
    {}}

\AtBeginBibliography{\footnotesize}	%

\usepackage{url} %
\makeatletter
\g@addto@macro{\UrlBreaks}{\UrlOrds}
\makeatother

\usepackage{hyperref}
\usepackage{csquotes}

\hypersetup{hidelinks,colorlinks=true,breaklinks=true,urlcolor=cit,citecolor=dkblue,linkcolor=dkblue}

\newtheorem{bigthm}{Theorem}

\newtheorem{theorem}{Theorem}[section]
\newtheorem{lemma}[theorem]{Lemma}

\newtheorem{proposition}[theorem]{Proposition}

\newtheorem{conjecture}[theorem]{Conjecture}
\newtheorem{corollary}[theorem]{Corollary}

\theoremstyle{definition}

\newtheorem{definition}[theorem]{Definition}
\newtheorem{example}[theorem]{Example}
\newtheorem{remark}[theorem]{Remark}

\newcommand{\R}{\mathbb{R}}

\addglobalbib{main} %

\usepackage[foot]{amsaddr}

\title[Single-shelf shuffling]{On the position matrix of single-shelf shuffle and card guessing}

\author{Raghavendra Tripathi}
\address{Division of Science, New York University, Abu Dhabi}
\email{r.tripathi@nyu.edu}

\date{} %

\subjclass[2020]{60C05, 05A15, 05A10}
\keywords{Single-shelf shuffle, card shuffling, card guessing, Pascal matrix, Bernoulli number, Bernoulli polynomial}

\begin{document}

\begin{abstract}
Mechanical shufflers used in many casinos employ a card shuffling scheme called  \emph{shelf shuffling}. In a single-shelf shuffling, cards arrive sequentially, and each incoming card is independently placed on the top or the bottom of a shelf with equal probability. The position matrix of a single-shelf shuffling encodes the probability that the $i$-th incoming card is in position $j$ after one round of single-shelf shuffle. The spectral properties of the position matrix of card shuffling schemes are helpful in the analysis of card guessing games without feedback. In this paper, we determine the full spectrum and the corresponding eigenspaces of the position matrix $M$ of a single-shelf shuffle. This strengthens and resolves two conjectures in a recent work [arXiv:2507.10294]. As a consequence of these results, we show that the maximum number of expected correct guesses without feedback after $k\geq (1+\epsilon)$ many shuffles is of the order $1+O(n^{-2\epsilon})$. On the other hand, the expected number of correct guesses after one shuffle is at most $\sqrt{2n/\pi}+1+O(n^{-1/2})$, and we give a strategy (not optimal) that achieves $\sqrt{2n/\pi}-1$ number of correct cards in expectation. 
\end{abstract}

\maketitle

\section{Introduction}
\label{sec:intro}
The analysis of card shuffling methods has occupied probabilists for over a century. Understanding the properties of various shufflings has important practical applications, and their analysis often reveals a rich interplay between probability, combinatorics, number theory, and representation theory. It is impossible to list all the relevant work in this area; we refer the reader to the recent book by Diaconis and Fulman~\cite{Diaconis23Math} and the extensive bibliography in the book for an introduction as well as a comprehensive account of this fascinating field. Arguably, the most important question about a shuffling scheme is how well it shuffles the cards, or equivalently, how many times one needs to shuffle the deck to get a truly random deck. A natural way to measure this is in terms of the mixing time for the shuffling scheme. The modern analysis of card shuffling began with rigorous analysis of mixing time of riffle-shuffle~\cite{Diaconis81Generating,Aldous83,aldous1986shuffling,bayer1992trailing,diaconis1995riffle}. Mixing time and the cutoff phenomenon for various shufflings continue to be studied~\cite{Ghosh20Total,Ghosh25Limit,Teyssier20LimitProfile,Bernstein19Cutoff,chen2025cutoff, Nestoridi25Cutoff,Sellke22Cutoff}. Alternatively, one can study various statistics like descents, valleys, increasing subsequences, etc., associated with the permutations induced by a shuffling scheme. Comparing these statistics with the corresponding statistics in a uniform random permutation also provides a qualitative and useful measure of the quality of a shuffling, and it has been carried out in several papers~\cite{clay2025limit,diaconis1995riffle,Kim21CLT}.

In this paper, we are concerned with the \emph{shelf shuffling} with a single-shelf $m=1$ (see Definition~\ref{def:ShelfShuffle}).  
\begin{definition}[$m$-Shelf shuffle]
\label{def:ShelfShuffle}
Consider a deck of $n$ cards arranged in increasing order. A shuffler has $m$-shelves. Cards are drawn from the bottom of the deck and placed on a shelf uniformly at random. Once a shelf is selected, the card is placed at the top or bottom of the pile with probability $1/2$ each. 
\end{definition}

The analysis of the shelf-shuffling first appeared in an influential paper by Diaconis, Fulman, and Holmes~\cite{DFH}. Among several other things,~\cite{DFH} derived a closed-form expression for the transition kernel of $m$-shelf shuffle and used it to compute the asymptotic rate of convergence to the stationary distribution in $\ell_{\infty}$ and the separation distance. In a recent work, Ottolini and Chen~\cite{chen2025cutoff} established the cutoff in total variation distance for $m$-shelf shuffling, Clay~\cite{clay2025limit} studied limit theorems for descents and inversions of shelf shuffles, and~\cite{Clay} studied the card guessing games with and without feedback for a single-shelf shuffler. 

Before we proceed further, we describe the card-guessing games with varying feedback. Consider a shuffled deck of $n$ cards. A player tries to guess the cards sequentially from the top of the deck. After each guess, the top card is removed from the deck. We refer to this as \emph{no feedback card guessing}. If the player is shown the removed card after each guess, we refer to this as \emph{complete feedback card guessing}. The goal of the player is to maximize the expected number of correct guesses. It is easy to see that for a uniformly shuffled deck of cards, one can guess only $1$ card in expectation without any feedback and $O(\log n)$ cards in expectation with complete feedback. As suggested by Bayer and Diaconis~\cite{bayer1992trailing}, one can measure the quality of a shuffling by understanding how many cards one can guess correctly with complete feedback and without feedback. A guessing strategy is a choice of card $g_j$ that the player guesses at the $j$-th step. We refer to a strategy that maximizes the expected number of correct guesses (with or without feedback) as \emph{optimal strategy}. Let us emphasize that an optimal strategy need not be unique. 

Card guessing games have been studied in the context of clinical trials and ESP experiments. For a uniform random deck with cards appearing with multiplicities, the optimal strategy with and without feedback was established in~\cite{diaconis1981analysis}. Since then, several authors have investigated the expected number of correct cards under the optimal strategy~\cite{he2023card,OS23Guessing}, CLT for the number of correct guesses~\cite{OT24}. Similar investigations have been carried out for card guessing for various shuffling schemes with different types of feedback~\cite{Ciucu98Nofeedback,Kuba24,DGHS22Partial,krityakierne2022no,KSPTY}.

For the shelf shuffling, this question was proposed in~\cite[Section 5]{DFH}, and the authors conjectured an optimal strategy for guessing with complete feedback and reported the results of a Monte Carlo experiment to compute the distribution of the number of correct guesses with the conjectured strategy for a deck of $n=52$ cards with various values of $m$. The first rigorous result in this direction was obtained very recently by Clay~\cite{Clay} in the case of a \emph{single-shelf shuffler} ($m=1$). From hereon, we only consider the single-shelf shuffling unless stated otherwise. Let us define the the \emph{position matrix} $M$ of a single-shelf shuffle:
\[
M(i, j) \coloneqq \mathbb{P}(\text{Card }i \text{ appears at position }j \text{ after one single-shelf shuffle})\;, \quad 1\leq i, j\leq n\;.
\]
A simple combinatorial argument shows that the position matrix of a single-shelf shuffle is given by
\[ M(i, j) = \frac{1}{2^i}\left(\binom{i-1}{j-1}+\binom{i-1}{n-j}\right)\;,\quad 1\leq i, j\leq n\;.\]
In the above and throughout this paper, we use the convention that $\binom{0}{0}=1$ and $\binom{a}{b}=0$ for $a<b$. Using the explicit description of the position matrix $M$, Clay proved that, under the strategy given by~\cite{DFH}, the expected number of correct guesses, in the complete feedback case, is $3n/4$. The no-feedback case is significantly harder. In this case, Clay~\cite{Clay} conjectured an optimal strategy, and proved that the expected number of correct guesses without feedback is at most $O(\sqrt{n})$ \emph{under his conjectured strategy}. Clay~\cite{Clay} also made two conjectures about the position matrix of a single-shelf shuffle.
\begin{conjecture}{\cite[Conjecture 1 and 2]{Clay}}
\label{conj:Clay}
Let $n\geq 3$. 
\begin{enumerate}
    \item $(1/4, v)$ is an eigenvalue-eigenvector pair of $M$ with multiplicity at least one where $v\in \mathbb{R}^n$ such that 
\[ v(k) = n^2-3nk+\frac{3}{2}k(k+1)-1\;.\] 
\item The non-zero eigenvalues of $M$ are $2^{-i}$ where $0 \leq i\leq n-1$ is an even integer.
\end{enumerate}
\end{conjecture}
Both conjectures are based on extensive numerical simulations. However, the numerical methods are too inefficient to provide any reasonable guess for the remaining eigenvectors. The motivation for this conjecture comes from the work of Ciucu~\cite{Ciucu98Nofeedback}, who studied the guessing strategy after a large number of dovetail shuffles. Ciucu's work crucially relies on diagonalizing the position matrix of dovetail shuffling. 

\subsection{Contributions of this paper}
In this paper, we refine and prove a strengthening of Conjecture~\ref{conj:Clay}. In particular, we provide explicit formulas for all eigenvectors of $M$. We also prove explicit formulas for the dual eigenvectors of $M$ and use them to diagonalize $M=XDX^{-1}$ where $D$ is a diagonal matrix. Using this, we show that after $k>(1+\epsilon)\log n$ repeated shelf shuffles, the maximum number of correct guesses without feedback is bounded by $1+O(n^{-2\epsilon})$. En route, we also exhibit that Clay's conjectured strategy in the no-feedback case is \emph{not} optimal. However, we show that one can only guess $O(\sqrt{n})$ many cards \emph{under any strategy} without feedback. Furthermore, we describe a very simple strategy (not optimal) using which the expected number of correct guesses matches our upper bound up to an additive error of $1+O(n^{-2})$.  

\subsection{Results}
Our first result fully describes the spectrum of $M$ with an explicit description of the associated eigenspaces, confirming Conjectures~\ref{conj:Clay}. Before we state our main result, we need some notation. We use Knuth's notation $x^{\underline{k}}$ to denote the falling factorial $x^{\underline{k}}=x(x-1)\ldots (x-k+1)$. We interpret $x^{\underline{t}}=0$ for all $t>x$ and $x^{\underline{0}}=1$ for all $x\geq 0$. We use $B_k$ to denote the Bernoulli numbers, that is, $B_{k}$ is the coefficient of $t^{k}$ in $t/(e^{t}-1)$. We now state our main result. 

\begin{bigthm}
\label{thm:Main Theorem1}
Fix $n\geq 2$ and let $M$ be the position matrix as defined above. Then, 
\begin{enumerate}
    \item  $\operatorname{ker}(M)=\operatorname{span}\{\eta^{(j)}: 1\leq j\leq  \lfloor n/2\rfloor\}$ where $\eta^{(j)}:=e_j-e_{n-j+1}$, where $e_k$ is the standard $k$-th basis vector $e_k(j)=\delta_k(j)$ in $\R^n$. 
    \item For every even integer $0\leq i\leq n-1$, $2^{-i}$ is an eigenvalue of $M$ with multiplicity $1$.
    \item For every even integer $0\leq i\leq n-1$, the vector $\zeta^{(i)}$ is an eigenvector of $M$ with eigenvalue $2^{-i}$, where
    \[
    \zeta^{(i)}(k)= \sum_{t=0}^{i}w_t^{(i)}(k-1)^{\underline{t}}\;,\qquad 1\leq k\leq n
    \]
    and 
    \[w_t^{(i)} =2^{i-t}B_{i-t}\binom{i}{t}(n-t-1)^{\underline{i-t}} = 2^{i-t}B_{i-t}\binom{i}{t}\frac{(n-t-1)!}{(n-i-1)!}\;.\]
\end{enumerate}
\end{bigthm}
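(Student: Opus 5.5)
The plan is to read $M$ as an operator on polynomials in the row/column index and to triangularize it in the falling-factorial basis. Two structural observations drive this.

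\textbf{Symmetry.} Column $n-j+1$ of $M$ equals column $j$, since
\[
M(i,n-j+1)=2^{-i}\Bigl(\tbinom{i-1}{n-j}+\tbinom{i-1}{j-1}\Bigr)=M(i,j).
\]
Equivalently $M=DLS$, where $D=\operatorname{diag}(2^{-i})$, $L(i,j)=\binom{i-1}{j-1}$ is the lower-triangular Pascal matrix (invertible), and $S=I+J$ with $J$ the reversal matrix. Since $D$ and $L$ are invertible, $\ker M=\ker S$. This kernel is exactly the space of antisymmetric vectors, which is spanned by the $\eta^{(j)}$ and has dimension $\lfloor n/2\rfloor$. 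That gives part (1).

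\textbf{The polynomial operator.} Identify $v\in\R^n$ with the unique polynomial $p$ of degree $\le n-1$ such that $v(k)=p(k-1)$. Setting $x=i-1$, we get
\[
(Mv)(i)=2^{-(x+1)}\sum_{m=0}^{x}\binom{x}{m}q(m),\qquad q(m)=p(m)+p(n-1-m).
\]
The key identity is
\[
\sum_m\binom{x}{m}m^{\underline t}=x^{\underline t}\,2^{x-t}.
\]
From it, $T:q\mapsto 2^{-(x+1)}\sum_m\binom{x}{m}q(m)$ sends $m^{\underline t}$ to $2^{-1-t}x^{\underline t}$. So $T$ preserves degree and is diagonal in the falling-factorial basis.

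The map $p\mapsto q$ is also degree-nonincreasing. If $p$ has degree $t$ with leading coefficient $a$, then $q$ has $x^t$-coefficient $(1+(-1)^t)a$.

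Hence $p\mapsto Tq$ is upper triangular in the basis $\{x^{\underline t}\}_{t=0}^{n-1}$. Its diagonal entries are $2\cdot 2^{-1-t}=2^{-t}$ for even $t$ and $0$ for odd $t$. The characteristic polynomial is therefore $\prod_{t\text{ even}}(\lambda-2^{-t})\cdot\lambda^{\lfloor n/2\rfloor}$. The values $2^{-t}$ are pairwise distinct, so each nonzero eigenvalue is simple, which proves part (2). Moreover, the eigenvector for $2^{-i}$ is the unique (up to scale) polynomial of exact degree $i$ in the invariant flag $\mathcal P_{\le i}$.

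\textbf{Identifying the eigenvector (part 3).} By uniqueness, it suffices to check that $\zeta^{(i)}$, which has degree $i$ and leading coefficient $w_i^{(i)}=B_0=1$, satisfies $T(\zeta+\zeta\circ\rho)=2^{-i}\zeta$, where $\rho(m)=n-1-m$. The steps are:

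\begin{enumerate}
\item Expand $(n-1-m)^{\underline t}$ in the basis $m^{\underline s}$ via Chu--Vandermonde:
\[
(a-m)^{\underline t}=\sum_s\binom{t}{s}(a-s)^{\underline{t-s}}(-1)^s m^{\overline{s}}\quad\text{(rising factorials)}.
\]
Then convert rising to falling factorials, or alternatively work with the generating function $\sum_t w_t\,y^t/t!$.
\item Compare coefficients of $x^{\underline s}$. This gives the triangular recursion
\[
(2^{-i}-2^{-1-s})\,w_s=2^{-1-s}\cdot[\text{coefficient of } m^{\underline s}\text{ in }\zeta\circ\rho].
\]
\item Show that the proposed $w_t$ solve this recursion.
\end{enumerate}

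I expect step 3 to be the main obstacle. My first attempt would be to pass to exponential generating functions. There the reflection $m\mapsto n-1-m$ and the falling-factorial structure turn the sums into products of $(1+y)^{n-1}$-type factors with $\frac{2y}{e^{2y}-1}$-type series. The needed identity should then reduce to the classical Bernoulli facts
\[
\sum_s\binom{r}{s}B_s=B_r\ (r\ne1),\qquad B_r(1-x)=(-1)^rB_r(x),
\]
together with $B_{\text{odd}>1}=0$, which accounts for the factor $2^{i-t}B_{i-t}$.

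If the generating-function bookkeeping becomes unwieldy, my fallback is to use the fact that $E[q(\mathrm{Bin}(x,1/2))]$ acts on $e^{\theta m}$ as $\bigl(\tfrac{1+e^\theta}{2}\bigr)^x$. I would test the identity on exponentials in that form and then extract the polynomial eigenvector by differentiating in $\theta$.
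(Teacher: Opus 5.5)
\textbf{Parts (1) and (2).} These are correct, and they are the paper's argument in polynomial language. Writing $M=L(I+P)$ with $L$ invertible gives the kernel. The falling factorials are eigenvectors of $L$ with eigenvalues $2^{-1-t}$, by the binomial factorial-moment identity. The reflection is triangular in that basis with diagonal $(-1)^t$, so $M$ is triangular with diagonal $2^{-1-t}(1+(-1)^t)$.

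\textbf{The gap is part (3).} This is the substantive claim of the theorem. You reduce it to a coefficient recursion but never check that the Bernoulli formula satisfies it. The identities you expect to close it, $B_r(1)=B_r$ and $B_r(1-x)=(-1)^rB_r(x)$, do not supply what is needed.

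\textbf{An error in step 1.} Your expansion is false as written: at $t=2$, $a=0$, $m=1$ the left side is $2$ and the right side is $4$. With rising factorials the coefficient should be $a^{\underline{t-s}}$. The form you actually want, with no conversion step, is
\[
(a-m)^{\underline{t}}=\sum_{s=0}^{t}(-1)^s\binom{t}{s}(a-s)^{\underline{t-s}}\,m^{\underline{s}}.
\]
This is the paper's Lemma on $c_N(k,m)$. It also follows from Vandermonde applied to $(a-m)^{\underline{t}}=(-1)^t(m+t-1-a)^{\underline{t}}$.

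\textbf{How step 3 goes with this expansion.} The verification is short; it is exactly the paper's check on $T$.
\begin{enumerate}
\item Put $N=i-s$ and $K=\binom{i}{s}(n-1-s)^{\underline{N}}$.
\item The factorials telescope, $(n-1-s)^{\underline{t-s}}(n-1-t)^{\underline{i-t}}=(n-1-s)^{\underline{N}}$, and $\binom{t}{s}\binom{i}{t}=\binom{i}{s}\binom{N}{t-s}$.
\item Hence the coefficient of $m^{\underline{s}}$ in $\zeta^{(i)}\circ\rho$ is
\[
(-1)^sK\sum_{r=0}^{N}\binom{N}{r}2^{r}B_r=(-1)^sK\,2^{N}B_N(\tfrac12),
\]
while $w^{(i)}_s=2^{N}B_N K$.
\end{enumerate}
After cancelling $K\,2^{-1-s}$, your recursion becomes
\[
(-1)^{s}\,2^{N}B_N(\tfrac12)=(2-2^{N})B_N .
\]

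\textbf{Odd and even cases.} Since $i$ is even, $s$ and $N$ have the same parity.
\begin{itemize}
\item For odd $N$ both sides vanish: reflection gives $B_N(\tfrac12)=0$, and $B_N=0$ for odd $N\ge 3$. So here your listed facts suffice.
\item For even $N$, reflection is vacuous, and $B_r(1)=B_r$ evaluates at the wrong point. You need the duplication identity $B_N(\tfrac12)=(2^{1-N}-1)B_N$, which is the paper's Lemma $\sum_{r}\binom{N}{r}2^{r}B_r=(2-2^{N})B_N$. It comes from $e^{2y}-1=(e^{y}-1)(e^{y}+1)$, which gives
\[
e^{y}\frac{2y}{e^{2y}-1}=\frac{2y}{e^{y}-1}-\frac{2y}{e^{2y}-1}.
\]
\end{itemize}
This identity is the missing ingredient; without it your outline does not establish the eigenvector formula.
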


\begin{remark}
Some remarks are in order.
\begin{enumerate}
    \item Note that $w^{(i)}_t=0$ whenever $t>i$.
    \item Since $B_{0}=1$, $B_{1}=-\frac{1}{2}$ and $B_{2k+1}=0$ for all $k\geq 1$. In particular, $w^{(i)}_t=0$ for all odd $t$ except $t=i-1$.
     \item Note that $\zeta^{(0)}(k)=1$ for all $k$. This is in agreement with the fact that $M$ is a doubly stochastic matrix. 
     \item The smallest non-zero eignvalue of $M$ is $2^{-2m}$ where $m=\lfloor (n-1)/2\rfloor$.
\end{enumerate}
\end{remark}

Let $G$ denote the strategy of guessing the card $2j-1$ at location $j$ and $n-j+1$ for $1\leq j\leq \lceil n/2\rceil$. Our next result shows that this simple strategy is asymptotically almost optimal.

\begin{bigthm}
\label{thm: AlmostOptimal}
Consider a deck of $n$ cards that is shuffled once using a single-shelf shuffler. 
Let $G$ be the strategy described above, and let $H$ be an optimal strategy without feedback, and let $S_n(G)$ and $S_{n}(H)$ denote the number of correct guesses under $G$ and $H$, respectively.
Then, 
\[
\sqrt{\frac{2n}{\pi}}-1 \leq \mathbb{E}\left[S_n(G)\right]\leq \mathbb{E}\left[S_n(H)\right]\leq \sqrt{\frac{2n}{\pi}} + 1 + O(n^{-1/2})\;.
\]
\end{bigthm}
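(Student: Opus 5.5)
The plan is to reduce the no-feedback problem to a column-wise maximization of the position matrix $M$, and then to estimate the resulting central binomial sums.

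\textbf{Reduction.} Without feedback, a strategy is a sequence $g_1,\dots,g_n$ (repetitions allowed), and by linearity $\mathbb{E}[S_n]=\sum_{j=1}^n M(g_j,j)$. Hence
\[
\mathbb{E}[S_n(H)]=\sum_{j=1}^n \max_{i} M(i,j).
\]
The middle inequality $\mathbb{E}[S_n(G)]\le\mathbb{E}[S_n(H)]$ is then trivial. Write $M(i,j)=f_j(i)+h_j(i)$ with $f_j(i)=2^{-i}\binom{i-1}{j-1}$ and $h_j(i)=2^{-i}\binom{i-1}{n-j}$. Since $f_j(i+1)/f_j(i)=i/(2(i-j+1))$, the function $f_j$ is unimodal in $i$ and attains its maximum at $i=2j-2$ and $i=2j-1$, with value
\[
c_j:=2^{-(2j-1)}\binom{2j-2}{j-1}=\tfrac12\,4^{-(j-1)}\binom{2j-2}{j-1}.
\]
By the same argument, $\max_i h_j(i)=c_{n-j+1}$.

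\textbf{Lower bound.} Under $G$, for $j\le\lceil n/2\rceil$ we guess $2j-1$ at position $j$ and collect at least $c_j$ from the $f$-term. We guess $2j-1$ at position $n-j+1$ and collect at least $c_j$ from the $h$-term; this requires $2j-1\le n$, which holds. We drop all other nonnegative terms, taking care not to double count the middle position when $n$ is odd. This gives $\mathbb{E}[S_n(G)]\ge 2\sum_{j\le \lfloor n/2\rfloor} c_j$, up to one extra term. Next apply the exact identity
\[
\sum_{k=0}^{m-1}4^{-k}\binom{2k}{k}=2m\,4^{-m}\binom{2m}{m},
\]
together with the standard bound $4^{-m}\binom{2m}{m}\ge 1/\sqrt{\pi(m+1/2)}$ at $m=\lfloor n/2\rfloor$. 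The result is $2\sum_{j\le m}c_j\ge 2m/\sqrt{\pi(m+1/2)}$, which is $\ge\sqrt{2n/\pi}-1$ after an elementary check covering both parities of $n$.

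\textbf{Upper bound (main obstacle).} The crude bound $\max_i(f_j+h_j)\le c_j+c_{n-j+1}$ gives $\sum_{k<n}4^{-k}\binom{2k}{k}\approx 2\sqrt{n/\pi}$, which is too large by a factor $\sqrt2$. The key point is that $f_j$ and $h_j$ peak at different values of $i$, namely near $2j$ and near $2(n-j+1)$, unless $j$ is close to $n/2$. I will instead bound
\[
\max_i\bigl(f_j(i)+h_j(i)\bigr)\le \max(c_j,c_{n-j+1})+\max_i\min\bigl(f_j(i),h_j(i)\bigr).
\]
The first term sums to $2\sum_{j\le n/2}c_j\le\sqrt{2n/\pi}+O(n^{-1/2})$, using the identity above and the upper bound $4^{-m}\binom{2m}{m}\le 1/\sqrt{\pi m}$.

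For the error term, write $j=n/2-d$. At any $i$, at least one of $|i-2j|$ and $|i-2(n-j+1)|$ is $\gtrsim |d|$. Use a Chernoff/local-CLT estimate for the negative-binomial-type profile, of the form $f_j(i)\lesssim j^{-1/2}e^{-c(i-2j)^2/i}$, and similarly for $h_j$. This yields $\max_i\min(f_j,h_j)\lesssim n^{-1/2}e^{-cd^2/n}$ for $|d|\le n/4$, and an exponentially small bound otherwise. Summing over $d$ gives an $O(1)$ error.

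Getting the sharp additive constant $1$ rather than an unspecified $O(1)$ is the delicate part. There I would look at the $O(\sqrt n)$ middle columns directly: $M(i,j)$ there is the probability that a $\mathrm{Bin}(i-1,1/2)$ variable equals $j-1$ or $n-j$. I would bound $\max_i$ of this explicitly by comparing to $c_j$ plus a term whose sum over $j$ telescopes to at most $1+O(n^{-1/2})$. If that comparison fails, I would fall back to bounding the excess of $\max_i M(i,j)$ over $\max(c_j,c_{n-j+1})$ by $2^{-i}\binom{i-1}{\lfloor (i-1)/2\rfloor}$ evaluated at $i\approx n$, summed over the window in which both binomial terms are non-negligible.
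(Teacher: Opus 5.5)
The paper states this theorem but gives no proof of it (only Theorems 1 and 3 are proved), so I am judging your argument on its own. Your reduction to $\sum_j\max_i M(i,j)$ is correct, and the middle inequality is trivial. Your lower bound via $\sum_{k<m}4^{-k}\binom{2k}{k}=2m\,4^{-m}\binom{2m}{m}$ and $4^{-m}\binom{2m}{m}\ge 1/\sqrt{\pi(m+1/2)}$ is also correct, including the parity check.

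\textbf{The gap is in the upper bound.} The statement has an explicit additive constant, but your Chernoff/local-CLT estimate, with an unspecified $c$, only gives $\sqrt{2n/\pi}+O(1)$. Neither fallback plan in your last paragraph is carried out. The second one (a per-column excess of order $n^{-1/2}$, summed over a window whose width and tails you never fix) cannot produce the constant $1$ as stated. The premise of the whole detour is also false. You assert $\max_i h_j(i)=c_{n-j+1}$, but the row index is confined to $1\le i\le n$. For $j<n/2$ the unconstrained maximizer $i\approx 2(n-j)$ lies outside this range. That is exactly where the spurious factor $\sqrt2$ in your ``crude'' bound comes from.

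\textbf{The missing idea: use $i\le n$.} Once you do, the crude split $\max_i(f_j+h_j)\le\max_i f_j+\max_{i\le n}h_j$ already suffices.
\begin{itemize}
\item Fix $j\le (n+1)/2$. Then $h_j$ vanishes for $i\le n-j$. On its support, $h_j(i+1)/h_j(i)=i/\bigl(2(i-n+j)\bigr)\ge 1$ for $i\le n-1$, because $i\le n-1\le 2(n-j)$. So $h_j$ is nondecreasing on $1\le i\le n$, and $\max_{i\le n}h_j(i)=h_j(n)=2^{-n}\binom{n-1}{j-1}$.
\item Hence $\max_i M(i,j)\le c_j+2^{-n}\binom{n-1}{j-1}$ for $j\le (n+1)/2$.
\item By the column symmetry $M(i,j)=M(i,n+1-j)$ (equivalently $MP=M$), the same bound holds with $c_{n+1-j}$ in place of $c_j$ when $j>(n+1)/2$.
\end{itemize}
Now sum over $j$. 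The second terms contribute exactly $\sum_{j=1}^n 2^{-n}\binom{n-1}{j-1}=\tfrac12$. Write $p_m=4^{-m}\binom{2m}{m}$. The main terms give $\sum_j c_{\min(j,n+1-j)}$, which equals $2m\,p_m$ for $n=2m$ and $(2m+\tfrac12)p_m$ for $n=2m+1$. With $p_m\le 1/\sqrt{\pi(m+1/4)}$ this is at most $\sqrt{2n/\pi}$ in both parities. If you prefer the weaker $p_m\le 1/\sqrt{\pi m}$, you get $\sqrt{2n/\pi}+O(n^{-3/2})$ instead.

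So $\mathbb{E}[S_n(H)]\le\sqrt{2n/\pi}+\tfrac12+O(n^{-3/2})$. This is stronger than the claimed bound and needs no Gaussian approximation. In fact $h_j\le f_j$ on $1\le i\le n$ for such $j$, so your error term $\max_i\min(f_j,h_j)$ is exactly $h_j(n)$, and its total over all columns is exactly $\tfrac12$.
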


\begin{remark}
\begin{enumerate}
    \item This result improves and strengthens Clay~\cite[Theorem 1.3]{clay2025limit} in several respects. First of all, it proves $O(\sqrt{n})$ upper bound on the optimal score in the no-feedback setting \emph{unconditionally}. Secondly, while Clay remarked that $\sqrt{\frac{2n}{\pi}}+ 0.68$ seems to be in strong agreement with the optimal score obtained numerically, the bound in~\cite{Clay} has an error term that is of order $O(\sqrt{n})$ itself. Thus, Theorem~\ref{thm: AlmostOptimal} refines ~\cite[Theorem 1.3]{clay2025limit}. 

    \item Optimal strategy for card guessing without feedback is to guess a card $g_j\in \arg\max \{M(i, j): 1\leq i\leq n\}$ at position $j$ and the expected number of correct guesses in this case is $\sum_{j=1}^{n}m_j$ where $m_j=\max_{1\leq i\leq n} M(i, j)$. Clay made an elaborate conjecture~\cite[Appendix A]{Clay} for $g_j$s. Let $m=\lceil \sqrt{n-1}/2-1\rceil$. Clay's conjectured strategy implies, in particular, that $m_j = M(2j-1, j)$ for all $j\leq \lfloor n/2\rfloor -m-1$, even though $g_j$ need not be unique. Unfortunately, the conjectured strategy is not optimal. It can be numerically verified that for $n=24$ and $j=10$ we have $ M(19, 10)< 0.098< 0.099 < M(20, 10) $. It is worth remarking that Clay's upper bound of $O(\sqrt{n})$ is proved under the assumption that this strategy is optimal. 
    %\item Finally, let us remark that following our proof, it is in fact possible to show that Clay's strategy achieves a larger number of correct guesses in expectation than our strategy. However, in the light of Theorem~\ref{thm: AlmostOptimal}, we know that it cannot do much better than one card. 
\end{enumerate}
\end{remark}

It is natural to ask, what is the maximum number of cards one can guess, in expectation, without feedback, if a deck of $n$ cards is shuffled $k$ times using a single-shelf shuffler? Let us denote this number by $E_{n, k}$. Our next result shows that $E_{n, k}\approx 1$ if $k> \log n$. This is analogous to the main result of Ciucu~\cite{Ciucu98Nofeedback} who performed a similar analysis for the dovetail shuffle.

\begin{bigthm}
\label{thm: Ciucu}
Let $E_{n, k}$ be as above. Let $\epsilon>0$ and let $k\geq (1+\epsilon)\log n$. Then, there exists $N_{\epsilon}$ and a universal constant $C$ such that 
\[ |E_{n, k}-1|\leq C n^{-2\epsilon},\]
whenever $n\geq N_{\epsilon}$. 
\end{bigthm}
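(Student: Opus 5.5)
We would bound the entries of $M^k$ through the diagonalization in Theorem~\ref{thm:Main Theorem1}. After $k$ shuffles the position matrix is $M^k$, since the shuffles are i.i.d.\ and positions compose. So $E_{n,k}=\sum_{j=1}^n \max_i M^k(i,j)$. Because $M$ (hence $M^k$) is doubly stochastic, $\max_i M^k(i,j)\ge 1/n$, which gives $E_{n,k}\ge 1$. It therefore suffices to show $\max_{i,j}|M^k(i,j)-1/n|\le Cn^{-1-2\epsilon}$.

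Write $M=XDX^{-1}$ with the columns of $X$ being $\zeta^{(i)}$ ($i$ even) and $\eta^{(j)}$. Then
\[
M^k=\sum_{i \text{ even}} 2^{-ik}\,\zeta^{(i)}(\xi^{(i)})^{T},
\]
where the $\xi^{(i)}$ are the dual (left) eigenvectors, the rows of $X^{-1}$. The $i=0$ term is $\mathbf{1}\mathbf{1}^T/n$, because the left eigenvector for eigenvalue $1$ of a doubly stochastic matrix is $\mathbf 1$ and $\zeta^{(0)}=\mathbf1$. Hence
\[
M^k(a,b)-\tfrac1n=\sum_{i\ge2,\text{ even}}2^{-ik}\,\zeta^{(i)}(a)\,\xi^{(i)}(b).
\]

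The key steps are the following.
\begin{enumerate}
\item Obtain explicit dual eigenvectors. Following the paper's plan, $\xi^{(i)}$ should also be polynomial in $k$, expressible via Bernoulli polynomials, and normalized so that $\langle \xi^{(i)},\zeta^{(i)}\rangle=1$. Alternatively, since $M^T$ is related to $M$ via the symmetry $j\mapsto n-j+1$ and the Pascal structure, we would compute $\xi^{(i)}$ as eigenvectors of $M^T$ orthogonal to $\ker M$'s complement.
\item Bound the sup norms: show $\|\zeta^{(i)}\|_\infty\|\xi^{(i)}\|_\infty\le C^i n^{2i-1}$, or more crudely $\le (Cn)^{2i}/n$. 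This is plausible because $\zeta^{(i)}$ is a degree-$i$ polynomial with coefficients $w_t^{(i)}=O(|B_{i-t}|2^{i-t}\binom it n^{i-t})$, so $\|\zeta^{(i)}\|_\infty\lesssim n^i\sum_t\binom it 2^{i-t}|B_{i-t}|$. We would use $|B_{2m}|\le 4(2m)!/(2\pi)^{2m}$ to control this growth, and we expect the dual to contribute $n^{i-1}$ up to similar factors.
\item Sum the series. With $2^{-ik}$ and $k\ge(1+\epsilon)\log_2 n$ (we read $\log$ as base $2$; otherwise the constant adjusts), we get $2^{-ik}n^{2i}$ only if the exponent works. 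Here the scaling matters: we need $2^{-k}\cdot n\le n^{-\epsilon}$, meaning the per-index growth must be $n^{i}$ in total rather than $n^{2i}$. So we must show $|\zeta^{(i)}(a)\xi^{(i)}(b)|\le C_i n^{i/2}\cdot n^{-1}$ roughly, i.e.\ each pair scales like $n^{i/2}/n$. Then the $i=2$ term gives $2^{-2k}n\cdot n^{-1}\le n^{-2-2\epsilon}$, the higher terms form a geometric series dominated by it, and summing over $b$ gives $Cn^{-1-2\epsilon}$ total.
\end{enumerate}

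The main obstacle is the sharp bound on $\zeta^{(i)}(a)\xi^{(i)}(b)$ that is uniform in $i$ up to $n$. The factorial growth of the Bernoulli coefficients must be absorbed by $2^{-ik}$ for large $i$, but near $i\sim n$ the crude bounds blow up. We would first try to normalize using the generating function $\sum_i \zeta^{(i)}(k)x^i/i!$, which by the Bernoulli structure should be $\frac{2x}{e^{2x}-1}$-type times $(1+x)^{\cdot}$, and extract $\|\zeta^{(i)}\|_\infty\le i!\,(c\,n)^{i}/i!$-type estimates by Cauchy bounds on a circle of radius $\asymp 1/n$ in rescaled variables. We would handle the large-$i$ tail separately by the trivial bound $\|M^k\|\le1$ combined with the eigenvalue gap.
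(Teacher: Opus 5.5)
Your reduction is the same as the paper's: $M^k$ is the position matrix after $k$ shuffles, $E_{n,k}=\sum_j\max_i M^k(i,j)\ge 1$, and it suffices to bound $\max_{a,b}|M^k(a,b)-\frac1n|$ through the spectral expansion whose $i=0$ term is $\frac1n J$. The whole content of the theorem, however, is a bound on $|\zeta^{(i)}(a)\xi^{(i)}(b)|$ that is geometric in $i$ uniformly for $2\le i\le n-1$. You do not supply such a bound, and both targets you set are wrong.

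\begin{itemize}
\item The bound $C^in^{2i-1}$ of step 2 is too weak to sum, as you notice.
\item The bound $C_in^{i/2-1}$ of step 3 is false already for $i=2$. Here $\zeta^{(2)}(1)=\frac23(n-1)(n-2)$, and for $n\ge 4$ one has $\xi^{(2)}=\frac{1}{2(n-2)}(1,-1,0,\dots,0,-1,1)$ (one checks $\langle \xi^{(2)},\zeta^{(2)}\rangle=1$). So $\zeta^{(2)}(1)\xi^{(2)}(1)=\frac{n-1}{3}$. The $(1,1)$ entry of $M^k-\frac1nJ$ is therefore $\frac{n-1}{3}4^{-k}(1+o(1))$. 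This is of exact order $n^{-1-2\epsilon}$ when $2^k\asymp n^{1+\epsilon}$, not $O(n^{-2-2\epsilon})$ as step 3 asserts.
\end{itemize}

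What is actually needed, and suffices, is $|\zeta^{(i)}(a)\xi^{(i)}(b)|\le C^i n^{i-1}$ uniformly in $i$.

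The missing idea is an explicit dual basis with factorial decay. The duals are not polynomial in the coordinate. The paper writes down the left eigenvectors of the triangular matrix $T$, namely $\widetilde w^{(i)}_t=\frac{1}{t-i+1}\binom{t}{i}\frac{(n-1-i)!}{(n-1-t)!}$ for $t\ge i$ and $0$ for $t<i$, and multiplies them by the explicit $B^{-1}$. This gives sparse vectors built from $\pm\binom{i-1}{a-1}$ and $\pm\binom{i-1}{a-1-(n-i)}$, supported on $\{a\le i\}\cup\{a\ge n-i+1\}$, with $\|\widetilde\zeta^{(i)}\|_\infty\le 2^i/(i!\,(n-i))$.

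That factor $1/i!$ cancels the $i!$ coming from the Bernoulli numbers in $\|\zeta^{(i)}\|_\infty\le 4\,i!\,n^i$. Hence $|\zeta^{(i)}(a)\widetilde\zeta^{(i)}(b)|\le 4(2n)^i/(n-i)$, and each entry of $M^k-\frac1nJ$ is at most $\sum_{i}4(2n^{-\epsilon})^i/(n-i)=O(n^{-1-2\epsilon})$.

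No sharper control of $\zeta^{(i)}$ alone can replace this. Indeed $|\zeta^{(i)}(1)|=2^i|B_i|\,(n-1)^{\underline{i}}$ genuinely has size about $i!\,\pi^{-i}(n-1)^{\underline{i}}$. Without a compensating $1/i!$ from the duals you are left with terms like $i!\,n^{-i\epsilon}$, which blow up once $i\gg n^{\epsilon}$.

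Your fallback for the tail ($\|M^k\|\le 1$ plus the spectral gap) also fails. $M$ is not normal, so the spectral projection onto the small eigenvalues has large norm, and controlling it requires exactly the dual vectors you have not computed.

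Incidentally, once the duals are known, their sparsity gives $\|\widetilde\zeta^{(i)}\|_1\le 2^i/(i!(n-i))$. Summing over columns with this $\ell^1$ bound does yield $|E_{n,k}-1|=O(n^{-1-2\epsilon})$. So your final rate is attainable, but not by the entrywise route you describe.
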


%\subsection{Outline of the paper}
\section{Proof of Theorem~\ref{thm:Main Theorem1}:}

\begin{proof}[Proof of Theorem~\ref{thm:Main Theorem1}(1)]
Let $L$ be the $n\times n$ lower triangular matrix with entries 
$$ L(i, j)= \frac{1}{2^{i}}\binom{i-1}{j-1}.$$ 
Note that the diagonal entries of $L$ are $L(i, i)=2^{-i}$ for all $1\leq i\leq n$. Therefore, $L$ is invertible. Let $P$ be the $n\times n$ permutation matrix given by $P(i, j)=\delta_{i, n-i+1}$. Notice that
 $$(LP)_{i, j}= L_{i, n-j+1}=\frac{1}{2^i}\binom{i-1}{n-j}.$$
The position matrix $M$ of a single-shelf shuffle is given by $M=L+LP=L(I+P)$. Note that $L$ is lower triangular with diagonal entries $2^{-i}$, it follows that $L$ is invertible. Therefore, $\operatorname{ker}(M)=\operatorname{ker}(I+P)$. This immediately gives us the first part of Theorem~\ref{thm:Main Theorem1} by observing that $\operatorname{ker}(M)= \operatorname{span}\left\{\eta^{(j)}: 1\leq j\leq \lfloor n/2\rfloor \right\}$.    
\end{proof}

We now exhibit an eigenbasis $\mathcal{B}$ of $L$ and show that with respect to this eigenbasis, $(I+P)$ is upper triangular. In particular, $M$ is upper triangular with respect to the basis $\mathcal{B}$. This immediately gives the spectrum of $M$.

%We, then, explicitly compute the upper triangular matrix $T=[M]_{\mathcal{B}}^{\mathcal{B}}$ corresponding to $M$ in the basis $\mathcal{B}$, in other words, $M=BTB^{-1}$. We then find the eigenvalues and eigenvectors of $T$ (and the corresponding dual eigenvectors). This yields the eigenvalues and the corresponding eigenvectors of $M$. Here, we should remark that the matrix $B$ and its inverse $B^{-1}$ are explicit and simple to describe, which is crucial to us. 

\subsection{An eigenbasis of \texorpdfstring{$L$}{L}}
For $0\leq j\leq n-1$, define a vector $v^{(j)}$ such that 
\[v^{(j)}(k) = (k-1)^{\underline{j}}.\]
Recall our notation for the falling factorial $x^{\underline{m}}=x(x-1)\cdots (x-m+1)$ and the convention that $a^{\underline{0}}=1$ for any non-negative integer $a\geq 0$. In particular, $v^{(0)}$ is the vector whose coordinates are all one. 
\begin{proposition}
\label{prop:Leigenvector}
    For each $0\leq j\leq n-1$, the vector $v^{(j)}$ is an eignevector of $L$ with eigenvalue $2^{-j-1}$. 
\end{proposition}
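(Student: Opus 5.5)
We verify $Lv^{(j)}=2^{-j-1}v^{(j)}$ directly, one coordinate at a time. For $1\le i\le n$,
\[
(Lv^{(j)})(i)=\frac{1}{2^i}\sum_{k=1}^{i}\binom{i-1}{k-1}(k-1)^{\underline{j}}
=\frac{1}{2^i}\sum_{m=0}^{i-1}\binom{i-1}{m}m^{\underline{j}} .
\]
We must show this equals $2^{-j-1}(i-1)^{\underline{j}}$. Put $N=i-1$. The claim reduces to the binomial identity
\[
\sum_{m=0}^{N}\binom{N}{m}m^{\underline{j}}=N^{\underline{j}}\,2^{N-j},
\]
since then the left side equals $2^{-N-1}N^{\underline{j}}2^{N-j}=2^{-j-1}N^{\underline{j}}$, as required.

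To prove the identity, we use $\binom{N}{m}m^{\underline{j}}=N^{\underline{j}}\binom{N-j}{m-j}$. Both sides equal $N!/\bigl((m-j)!\,(N-m)!\bigr)$ when $m\ge j$, and both vanish when $m<j$ by the convention $m^{\underline{j}}=0$. Summing over $m$ then gives
\[
N^{\underline{j}}\sum_{m=j}^{N}\binom{N-j}{m-j}=N^{\underline{j}}2^{N-j}
\]
whenever $N\ge j$. As a cross-check, the same identity follows by differentiating $(1+x)^N$ $j$ times and setting $x=1$.

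The only point needing care is the boundary case $N=i-1<j$. There both sides vanish, since every $m^{\underline{j}}$ with $m\le N<j$ is zero and $N^{\underline{j}}=0$. The expression $2^{N-j}$ has a negative exponent in this case, but it is multiplied by $0$, so there is no issue.

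Finally, $v^{(j)}$ is nonzero, because $v^{(j)}(j+1)=j!\neq 0$ and $j+1\le n$. It is therefore an eigenvector of $L$ with eigenvalue $2^{-j-1}$.

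I do not expect a genuine obstacle here; the step requiring attention is the falling-factorial and binomial bookkeeping, including the zero conventions. As a consistency check, the eigenvalues $2^{-1},\dots,2^{-n}$ obtained this way match the diagonal entries of the triangular matrix $L$.
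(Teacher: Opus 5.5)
Your proof is correct and follows essentially the same route as the paper. The paper writes the $i$-th coordinate as $\tfrac12\mathbb{E}[X^{\underline{j}}]$ with $X\sim\mathrm{Bin}(i-1,1/2)$ and cites the binomial factorial-moment identity. That identity is exactly $\sum_m\binom{N}{m}m^{\underline{j}}=N^{\underline{j}}2^{N-j}$, which you prove directly via $\binom{N}{m}m^{\underline{j}}=N^{\underline{j}}\binom{N-j}{m-j}$, and you add explicit checks of the case $i-1<j$ and of $v^{(j)}\neq 0$.
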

\begin{proof}
Note that $L$ is a lower triangular matrix and $L(i, i)=2^{-i}$. In particular, the set of eigenvalues of $L$ is $\{2^{-1-j}: 0\leq j\leq n-1\}$. Fix $0\leq j\leq n-1$ and note that 
    \begin{align*}
        (Lv^{(j)})_{i} = 2^{-i}\sum_{k=1}^{i}\binom{i-1}{k-1}(k-1)^{\underline{j}}=2^{-1}\;\mathbb{E}[X^{\underline{j}}]=2^{-j-1}(i-1)^{\underline{j}}\;,
    \end{align*}
    where $X$ is a Binomial random variable $X\sim \mathrm{Bin}(i-1, 1/2)$. And the last equality follows from the well-known identity for the factorial moment of a Binomial random variable
    \[
    \mathbb{E}\left[\mathrm{Bin}(n, p)^{\underline{m}}\right] = p^{m}n^{\underline{m}}\;.
    \]
    %\[\mathbb{E}[X^{\underline{j}}]=2^{-(i-1)}(i-1)^{\underline{j}}\;.\]
    We conclude that $Lv^{(j)}=2^{-j-1}v^{(j)}$ for each $0\leq j\leq n-1$. This concludes the proof.
    
\end{proof}
Let $B$ be the $n\times n$ matrix whose columns are eigenvectors of $L$, that is, 
\[ B = \begin{bmatrix}
    v^{(0)}| & v^{(1)}| & \cdots| & v^{(n-1)}
\end{bmatrix}\;.\]
It follows from Proposition~\ref{prop:Leigenvector} that $B$ is invertible and $L=B\widetilde{D}B^{-1}$ where $\widetilde{D}$ be the diagonal matrix 
\[\widetilde{D} = \operatorname{diag}[2^{-1}, 2^{-2}, \ldots, 2^{-(n-1)}]\;.\] 
It will be useful later to have an explicit formula for $B^{-1}$ given in the following lemma, which can be verified directly. Alternatively, we note that $B^{-1}(k, \ell)$ is the coefficient of $x^{\ell-1}$ in the polynomial $x^{\underline{k-1}}$. 

\begin{lemma}\label{lem:B-inverse}
The matrix $B$ is invertible. Its inverse $B^{-1}$ is lower triangular with
\begin{equation*}\label{eq:B-inverse}
  (B^{-1})(k,\ell)
  =
  \begin{cases}
    \displaystyle
    (-1)^{\,k-\ell}\,\frac{1}{(k-1)!}\binom{k-1}{\ell-1},
      & 1\le \ell\le k\le n,\\[6pt]
    0,& \ell>k.
  \end{cases}
\end{equation*}
\end{lemma}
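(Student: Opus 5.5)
The plan is to factor $B$ as a lower-triangular Pascal matrix times a diagonal matrix, and then invert each factor using the classical inverse of the Pascal matrix.

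\emph{Structure of $B$.} By construction, the $j$-th column of $B$ is $v^{(j-1)}$, so
\[
B(k,j)=(k-1)^{\underline{j-1}},\qquad 1\le j,k\le n .
\]
By the convention $x^{\underline{t}}=0$ for $t>x$, this vanishes when $j>k$, so $B$ is lower triangular. Its diagonal entries are $(k-1)^{\underline{k-1}}=(k-1)!\neq 0$, so $B$ is invertible and $B^{-1}$ is again lower triangular.

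\emph{Factorization.} For $j\le k$ I rewrite the entries as
\[
(k-1)^{\underline{j-1}}=\frac{(k-1)!}{(k-j)!}=\binom{k-1}{j-1}(j-1)! .
\]
With $\Pi(k,j)=\binom{k-1}{j-1}$ the lower-triangular Pascal matrix and $\Delta=\operatorname{diag}\bigl(0!,1!,\dots,(n-1)!\bigr)$, this says $B=\Pi\Delta$. Hence
\[
B^{-1}=\Delta^{-1}\Pi^{-1}.
\]

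\emph{Inverse of the Pascal matrix.} I will use the standard fact $\Pi^{-1}(k,\ell)=(-1)^{k-\ell}\binom{k-1}{\ell-1}$. It suffices to check that
\[
\sum_{m}(-1)^{m-\ell}\binom{k-1}{m-1}\binom{m-1}{\ell-1}=\delta_{k\ell}.
\]
Using $\binom{k-1}{m-1}\binom{m-1}{\ell-1}=\binom{k-1}{\ell-1}\binom{k-\ell}{m-\ell}$, the sum becomes
\[
\binom{k-1}{\ell-1}\sum_{s}(-1)^s\binom{k-\ell}{s}=\binom{k-1}{\ell-1}(1-1)^{k-\ell}=\delta_{k\ell}.
\]
Equivalently, this is binomial inversion for the translation operators $x\mapsto x\pm1$.

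\emph{Conclusion.} Multiplying row $k$ of $\Pi^{-1}$ by $1/(k-1)!$ gives exactly
\[
(B^{-1})(k,\ell)=(-1)^{k-\ell}\,\frac{1}{(k-1)!}\binom{k-1}{\ell-1}\quad\text{for }\ell\le k,
\]
and $0$ for $\ell>k$, which is the claimed formula.

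I do not expect a genuine obstacle. The only delicate point is bookkeeping: keeping the shift between the column index $j$ and the falling-factorial index $j-1$ consistent with the convention $v^{(j)}(k)=(k-1)^{\underline{j}}$, and with the conventions for $x^{\underline{0}}$ and $\binom{0}{0}$ in the first row and column.
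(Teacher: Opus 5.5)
Your proof is correct and is essentially the direct verification the paper leaves to the reader. The factorization $B=\Pi\Delta$ only strips off the factorials $(j-1)!$, and the identity $\sum_m(-1)^{m-\ell}\binom{k-1}{m-1}\binom{m-1}{\ell-1}=\delta_{k\ell}$ is exactly what you get by multiplying $B$ against the claimed inverse.

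Your route is also the one to rely on, because the paper's alternative description is wrong. It describes $B^{-1}(k,\ell)$ as the coefficient of $x^{\ell-1}$ in $x^{\underline{k-1}}$, a signed Stirling number of the first kind. For $n=2$ that would give $B^{-1}(2,1)=0$, whereas $B=\begin{pmatrix}1&0\\1&1\end{pmatrix}$ has $B^{-1}(2,1)=-1$, which agrees with the lemma's formula.
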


\subsection{\texorpdfstring{$I+P$}{I plus P} is upper triangular in the basis \texorpdfstring{$\mathcal{B}$}{B}}
Let $\mathcal{B}\coloneqq \{v^{(j)}: 0\leq j\leq n-1\}$ be the eigenbasis of $L$ which we will refer as the \emph{falling factorial basis}. We now show that $(I+P)$ (and therefore $M=L(I+P)$) is upper triangular in the basis $\mathcal{B}$. To this end, set 
\[W_{j} :=\mathrm{span}\{v^{(k)}: 0\leq k\leq j\}, \quad 0\leq j\leq n-1\;.\] 
We will show that $(I+P)Lv^{(j)}\in W_j$ for each $0\leq j\leq n-1$. We need the following Lemma. 

\begin{lemma}
\label{lemma:Polynomial basis identity}
    Let $N, m$ be a fixed positive integer. Then, 
    \[(N-x)^{\underline{m}} = \sum_{k=0}^{m}c_{N}(k, m) x^{\underline{k}},\]
    where \[c_N(k, m) = (-1)^{k}(N-k)^{\underline{m-k}}\binom{m}{k} = (-1)^{k}\frac{(N-k)!}{(N-m)!}\binom{m}{k},\qquad 0\leq k\leq m\;.\]
    In particular, $c_{N}(m, m)=(-1)^{m}$ and $ c_N(0, m)=N^{\underline{m}}$ for all $m$. Furthermore, $c_N(k, m)=0$ for $k>m$.
\end{lemma}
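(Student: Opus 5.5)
The plan is to prove the identity as a polynomial identity in $x$ by the Chu--Vandermonde convolution for falling factorials,
\[
(a+b)^{\underline{m}}=\sum_{k=0}^{m}\binom{m}{k}a^{\underline{k}}\,b^{\underline{m-k}},
\]
which holds for all real (indeed formal) $a,b$. Since both sides are polynomials of degree at most $m$ in $x$, it suffices to produce the expansion formally; no restriction to integer $x$ is needed. I would apply the convolution with $a=-x$ and $b=N$, giving $(N-x)^{\underline{m}}=\sum_{k}\binom{m}{k}(-x)^{\underline{k}}N^{\underline{m-k}}$. This expands in the rising factorials of $x$, because $(-x)^{\underline{k}}=(-1)^k x(x+1)\cdots(x+k-1)$. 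Converting back to falling factorials is inconvenient, so I would use a better-adapted splitting instead.

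The better approach is a direct induction on $m$, or equivalently the binomial theorem for the forward difference operator. Let $f(x)=(N-x)^{\underline{m}}$ and expand it in Newton form:
\[
f(x)=\sum_{k=0}^{m}\frac{(\Delta^k f)(0)}{k!}\,x^{\underline{k}},\qquad \Delta g(x)=g(x+1)-g(x).
\]
This is valid for any polynomial of degree at most $m$, by Newton's forward difference formula. The key computation is then $\Delta f$. From the falling factorial rule $\Delta\, y^{\underline{m}}=m\,y^{\underline{m-1}}$ with $y=N-x$, and the fact that $x\mapsto x+1$ sends $y$ to $y-1$, we get
\[
\Delta (N-x)^{\underline{m}}=(N-x-1)^{\underline{m}}-(N-x)^{\underline{m}}=-m\,(N-x-1)^{\underline{m-1}}.
\]
This follows by factoring $(N-x-1)^{\underline{m-1}}$ out of both terms, which leaves $(N-x-m)-(N-x)=-m$. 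Iterating $k$ times gives
\[
\Delta^k f(x)=(-1)^k m^{\underline{k}}\,(N-x-k)^{\underline{m-k}}.
\]
Evaluating at $x=0$ and dividing by $k!$ yields the coefficient $(-1)^k\binom{m}{k}(N-k)^{\underline{m-k}}$, which is exactly $c_N(k,m)$.

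The remaining claims then follow by inspection. The equality $(N-k)^{\underline{m-k}}=(N-k)!/(N-m)!$ holds when $N\ge m$. When $N<m$, the falling-factorial form should be taken as the definition, which agrees with the paper's convention that falling factorials vanish past the argument. Setting $k=m$ gives $c_N(m,m)=(-1)^m$, setting $k=0$ gives $c_N(0,m)=N^{\underline{m}}$, and $\binom{m}{k}=0$ for $k>m$ gives the vanishing statement.

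The only mildly delicate step is the iterated difference identity. I would verify it once by the factoring argument above and then induct on $k$; that induction is routine. As a safeguard, one can check the identity at the $m+1$ points $x=0,\dots,m$, where the right side becomes a finite alternating sum matching the standard binomial inversion.
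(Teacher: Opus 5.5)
Your argument is correct, but it takes a different route from the paper's.

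\textbf{The paper's route.} It inducts on $m$. It writes $(N-x)^{\underline{m+1}}=(N-x-m)(N-x)^{\underline{m}}$ and uses $(x-k)x^{\underline{k}}=x^{\underline{k+1}}$ to get the recurrence $c_N(k,m+1)=(N-m-k)c_N(k,m)-c_N(k-1,m)$. It then checks that the posited closed form satisfies this recurrence, via Pascal's rule and $(m+1)\binom{m}{k-1}=k\binom{m+1}{k}$. That is guess-and-verify, and in the factorial form $(N-k)!/(N-m)!$ it tacitly needs $N\ge m$.

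\textbf{Your route.} Your Newton-series argument \emph{derives} the coefficients rather than checking them. The single rule $\Delta(N-x)^{\underline{m}}=-m(N-x-1)^{\underline{m-1}}$ reproduces itself under iteration. So $\Delta^k f(0)/k!=(-1)^k\binom{m}{k}(N-k)^{\underline{m-k}}$ comes out with no binomial bookkeeping. Vanishing for $k>m$ is automatic, and the identity holds for arbitrary (even real) $N$. The cost is reliance on Newton's forward-difference formula, whereas the paper's induction is self-contained.

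Two small remarks.

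First, the Vandermonde route you abandoned does work after a reflection: $(N-x)^{\underline{m}}=(-1)^m(x+m-1-N)^{\underline{m}}$. Convolving with $a=x$, $b=m-1-N$ then gives the lemma in one line, because $(m-1-N)^{\underline{m-k}}=(-1)^{m-k}(N-k)^{\underline{m-k}}$.

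Second, your aside on $N<m$ is slightly off. For $k>N$ the literal product $(N-k)^{\underline{m-k}}$ is a product of negative integers and is nonzero. For example, $N=0$, $m=2$ gives $(-x)(-x-1)=x^{\underline{2}}+2x^{\underline{1}}$. So it does not ``vanish past the argument''; only the terms with $k\le N<m$ vanish. Taking the literal product as the definition is still the right choice, and the paper only uses $N=n-1\ge m$ anyway.
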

\begin{proof}
    The proof follows from induction. We give the details for completeness. Fix $N\geq 1$. For $m=0$ and $m=1$, we easily verify that $c_N(0, 0)=1, \quad c_N(0, 1)=1, \quad c_N(1, 1)=-1$. Assume that the formula holds for every non-negative integer smaller $m$ for some $m\geq 1$. We now observe that 
    \begin{align*}
        (N-x)^{\underline{m+1}} &= (N-x-m)(N-x)^{\underline{m}}=(N-x-m)\sum_{k=0}^{m}c_N(k, m)x^{\underline{k}}\\
        &= \sum_{k=0}^{m}(N-m-k)c_N(k, m)x^{\underline{k}}-\sum_{k=0}^{m}c_N(k, m)(x-k)x^{\underline{k}}\\
        &= \sum_{k=0}^{m}(N-m-k)c_N(k, m)x^{\underline{k}}-\sum_{k=0}^{m}c_N(k, m)x^{\underline{k+1}}\;.
        %&= (N-m)c_N(k, m)+\sum_{k=1}^{m}\Big((N-m+k)c_N(k, m)-c_N(k-1, m)\Big)-c_N(m, m)x^{m+1}\;.
    \end{align*}
    Comparing the coefficients of $x^{\underline{k}}$ on both sides, we see that 
    \begin{align*}
         c_{N}(0, m+1) &= (N-m)c_N(0, m)\,,\\ 
         c_{N}(k, m+1) &= (N-m-k)c_N(k, m)-c_N(k-1, m)\,,\\
         c_N(m+1, m+1) &=-c_N(m, m)\;.
    \end{align*}
    Plugging the $c_N(0, m)=N^{\underline{m}}$ and $C_N(m, m)=(-1)^m$, we easily see that $c_N(0, m+1)=N^{\underline{m+1}}$ and $c_N(m+1, m+1)=(-1)^{m+1}$ holds. 
    Now assume that $1\leq k\leq m$. It suffices to show that 
    \[
\frac{(N-k)!}{(N-m-1)!}\binom{m+1}{k} = (N-m-k)\frac{(N-k)!}{(N-m)!}\binom{m}{k}+\frac{(N-k+1)!}{(N-m)!}\binom{m}{k-1}\;.
    \]
 Simplifying both sides, this is equivalent to  
 \[(N-m)\binom{m+1}{k} = (N-m-k)\binom{m}{k}+(N-k+1)\binom{m}{k-1}\;.\]
 We now use the Pascal's identity $\binom{m+1}{k}=\binom{m}{k}+\binom{m}{k-1}$ to conclude that right hand side is equal to
 \begin{align*}
    (N-m-k)\binom{m+1}{k}+(m+1)\binom{m}{k-1}=(N-m)\binom{m+1}{k}-k\binom{m+1}{k}+(m+1)\binom{m}{k-1}\;.
 \end{align*}
 The proof is now complete by observing that
 \[
 (m-1)\binom{m}{k-1}=k\binom{m+1}{k}\;.
 \] 
   % We now use the identity $\binom{m}{k}=\binom{m+1}{k}-\binom{m}{k-1}$, we get that 
   % \[(N-k)^{\underline{(m+1)-k}}\binom{m}{k}=(N-k)^{\underline{(m+1)-k}}\binom{m+1}{k}-(N-k)^{\underline{(m+1)-k}}\binom{m}{k-1}\;.\]
\end{proof}

As an immediate consequence, we get the $n\times n$ matrix corresponding to the map $x\mapsto (I+P)x$ in the basis $\mathcal{B}$. It will be convenient for us to index the rows and columns of this matrix by $0\leq i, j\leq n-1$. 

\begin{proposition}[$(I+P)$ is upper triangular in basis $\mathcal{B}$]
For any $0\leq j\leq n-1$, we have
\[
Pv^{(j)}= (-1)^{j}v^{(j)}+ \sum_{k=0}^{j-1} c_{n-1}(k, j) v^{(k)}, 
\]
In particular, the operator $(I+P)$ in the basis $\mathcal{B}$ has the matrix representation 
\[[(I+P)]_{\mathcal{B}}^{\mathcal{B}}= [\delta_{j, k}+c_{n-1}(k, j)]_{0\leq j, k\leq n-1}\;.\]
\end{proposition}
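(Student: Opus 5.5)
The plan is to compute $Pv^{(j)}$ coordinatewise and then expand it in the falling factorial basis using Lemma~\ref{lemma:Polynomial basis identity}. The permutation $P$ reverses coordinates: $P(i,j)=\delta_{j,n-i+1}$. Hence
\[
(Pv^{(j)})(k)=v^{(j)}(n-k+1)=(n-k)^{\underline{j}}=\bigl((n-1)-(k-1)\bigr)^{\underline{j}} .
\]
Take $N=n-1$ and $x=k-1$ in Lemma~\ref{lemma:Polynomial basis identity}. Since $k-1$ ranges over $0,\dots,n-1$, this gives
\[
(n-k)^{\underline{j}}=\sum_{t=0}^{j}c_{n-1}(t,j)\,(k-1)^{\underline{t}}=\sum_{t=0}^{j}c_{n-1}(t,j)\,v^{(t)}(k).
\]
This holds for every coordinate $1\le k\le n$, so $Pv^{(j)}=\sum_{t=0}^{j}c_{n-1}(t,j)v^{(t)}$. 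Separating off the top term with $c_{n-1}(j,j)=(-1)^j$ gives the displayed formula.

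One point needs checking. The lemma is a polynomial identity in $x$, and here it is evaluated at the integers $x=k-1$, where both sides are also integer-valued. Under the paper's convention, $a^{\underline{t}}=0$ for $t>a$, which agrees with evaluating the polynomial $x(x-1)\cdots(x-t+1)$ at a nonnegative integer $a<t$. Likewise $(n-k)^{\underline{j}}$ with $n-k<j$ vanishes both as a polynomial value and by convention. So no conflict arises.

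In the lemma's coefficient $(N-t)!/(N-m)!$, with $N=n-1$ and $m=j\le n-1$, the factorials are of nonnegative integers. Alternatively one can use the falling-factorial form $(N-t)^{\underline{m-t}}$, which needs no factorials. The case $j=0$ should be checked separately, since the lemma is stated for positive $m$; it is trivial because $Pv^{(0)}=v^{(0)}$.

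Finally, adding $Iv^{(j)}=v^{(j)}$ shows that the $(t,j)$ entry of the matrix of $I+P$ in the basis $\mathcal B$ is $\delta_{t,j}+c_{n-1}(t,j)$, for rows $t$ and columns $j$. This matrix is upper triangular because $c_{n-1}(t,j)=0$ for $t>j$. The only care required is the index orientation, namely that row $t$ is the coefficient of $v^{(t)}$; beyond that, no step is a real obstacle.
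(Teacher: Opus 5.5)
Your proposal is correct and is essentially the paper's proof: you evaluate $Pv^{(j)}$ coordinatewise as $\bigl((n-1)-(k-1)\bigr)^{\underline{j}}$ and expand it with the lemma for $(N-x)^{\underline{m}}$, taking $N=n-1$ and $x=k-1$. Your additional checks are sound details that the paper leaves implicit. These are the convention $a^{\underline{t}}=0$ for $t>a$, the trivial case $j=0$, and the corrected form $P(i,j)=\delta_{j,n-i+1}$. The last one is reading row $t$ as the coefficient of $v^{(t)}$, which is exactly what makes the matrix upper triangular and consistent with Corollary~\ref{cor:FormylaT}.
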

\begin{proof}
Fix a coordinate $1\leq t\leq n$ and observe that 
\[ Pv^{(j)}(t) =(n-t)^{\underline{j}}= ((n-1)-(t-1))^{\underline{j}}\;.\]
Using Lemma~\ref{lemma:Polynomial basis identity}, with $N=n-1$, we conclude
\[ 
Pv^{(j)}(t) =  \sum_{k=0}^{j} c_{N}(k, j)(t-1)^{\underline{k}} =  (-1)^{j}v^{(j)}(t) + \sum_{k=0}^{j-1} c_{N}(k, j)v^{(k)}(t)\;. 
\]
Since the above identity holds for each coordinate $t$ and the coefficients $c_{N}(k, j)$ are independent $t$, we conclude that  
\[
Pv^{(j)}= \sum_{k=0}^{j} c_{n-1}(k, j) v^{(k)}\;.
\]
Thus, we obtain $[P]_{\mathcal{B}}^{\mathcal{B}}(k, j)=c_{n-1}(k, j)$ for $0\leq k, j, \leq n-1$.
\end{proof}

\begin{corollary}[$M$ is upper triangular in the basis $\mathcal{B}$]
\label{cor:FormylaT}
    Let $T=[M]_{\mathcal{B}}^{\mathcal{B}}$ be the matrix of the operator $M$ in the basis $\mathcal{B}$. For $0\leq j\leq n-1$ 
    \[T(j, j) = 2^{-j-1}(1+(-1)^{j}) =\begin{cases}
        2^{-j} &  \text{if } j \text{ is even},\\
        0 & \text{otherwise}.
    \end{cases}\]
    And, for $0\leq j< k\leq n-1$, we have
    \[ T(j, k) = 2^{-1-j}c_{n-1}(j, k) =  2^{-1-j}(-1)^{j}(n-1-j)^{\underline{k-j}}\binom{k}{j}=2^{-1-j}(-1)^j\frac{(n-1-j)!}{(n-1-k)!}\binom{k}{j}\;.\]
\end{corollary}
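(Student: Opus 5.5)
The plan is to compute $Mv^{(k)}$ directly by combining the factorization $M=L(I+P)$ with the two structural facts already established. These are Proposition~\ref{prop:Leigenvector}, $Lv^{(j)}=2^{-j-1}v^{(j)}$, and the expansion of $Pv^{(k)}$ in the falling factorial basis from the preceding proposition. The order of the factors is essential. We first apply $(I+P)$, whose matrix in $\mathcal{B}$ is upper triangular, and then $L$, which is diagonal in $\mathcal{B}$. So $T=\widetilde{D}\,[(I+P)]_{\mathcal{B}}^{\mathcal{B}}$ as a product of matrices in the basis $\mathcal{B}$. Here $\widetilde{D}=\operatorname{diag}(2^{-1},\dots,2^{-n})$ is indexed from $0$ to $n-1$, so its $(j,j)$ entry is $2^{-j-1}$. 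The display in the paper stops at $2^{-(n-1)}$, which appears to be a typo: there are $n$ eigenvalues.

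Concretely, fix $0\le k\le n-1$. The preceding proposition gives
\[
(I+P)v^{(k)}=(1+(-1)^k)v^{(k)}+\sum_{j=0}^{k-1}c_{n-1}(j,k)\,v^{(j)} ,
\]
using $c_{n-1}(k,k)=(-1)^k$ from Lemma~\ref{lemma:Polynomial basis identity}. Applying $L$ term by term and using Proposition~\ref{prop:Leigenvector} yields
\[
Mv^{(k)}=2^{-k-1}(1+(-1)^k)v^{(k)}+\sum_{j=0}^{k-1}2^{-j-1}c_{n-1}(j,k)\,v^{(j)} .
\]
Reading off the coordinate of $v^{(j)}$ gives the entries of $T$. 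The diagonal entry is $T(k,k)=2^{-k-1}(1+(-1)^k)$, which equals $2^{-k}$ for even $k$ and $0$ for odd $k$. The off-diagonal entries are $T(j,k)=2^{-1-j}c_{n-1}(j,k)$ for $j<k$, and $T(j,k)=0$ for $j>k$, so $T$ is upper triangular.

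The last step is to substitute the explicit formula $c_{N}(j,k)=(-1)^j(N-j)^{\underline{k-j}}\binom{k}{j}=(-1)^j\frac{(N-j)!}{(N-k)!}\binom{k}{j}$ with $N=n-1$. This gives the two stated closed forms. Since $k\le n-1$, we have $n-1-k\ge 0$, so the factorials are well defined.

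I do not expect any real obstacle. The only point requiring care is the index convention: $B$'s columns are indexed from $0$, so the eigenvalue attached to $v^{(j)}$ is $2^{-j-1}$, not $2^{-j}$. The diagonal factor must be applied on the left, acting on the row index $j$; it must not be applied to the column index $k$.
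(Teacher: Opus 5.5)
Your proposal is correct and is the same argument the paper uses: the corollary is read off from $M=L(I+P)$, where $[L]_{\mathcal{B}}^{\mathcal{B}}=\widetilde{D}$ is diagonal with entries $2^{-j-1}$ and $[(I+P)]_{\mathcal{B}}^{\mathcal{B}}$ is upper triangular with entries $\delta_{jk}+c_{n-1}(j,k)$, so $T(j,k)=2^{-j-1}\bigl(\delta_{jk}+c_{n-1}(j,k)\bigr)$. Your remarks on indexing are also correct: $\widetilde{D}$ should end at $2^{-n}$, and the diagonal factor scales the row index $j$.
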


This determines the spectrum of the position matrix $M$, confirming~\cite[Conjecture 3.2]{Clay}.
\begin{corollary}[Spectrum of $M$ (Theorem~\ref{thm:Main Theorem1} (2))]
    The spectrum of the position matrix $M$ of a single-shelf shuffle is given by 
    \[\{0\}\cup\{2^{-2i}: i\in \mathbb{Z}, \;\; 0\leq 2i\leq n-1\}\;.\]
    Furthermore, each non-zero eigenvalue $2^{-2i}$ occurs with multiplicity $1$. 
\end{corollary}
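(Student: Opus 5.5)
The plan is to read the spectrum directly off the upper triangular matrix $T=[M]_{\mathcal{B}}^{\mathcal{B}}$ of Corollary~\ref{cor:FormylaT}. Since $\mathcal{B}$ is a basis (Lemma~\ref{lem:B-inverse}), $M=BTB^{-1}$, so $M$ and $T$ have the same characteristic polynomial. For a triangular matrix this polynomial is $\prod_{j=0}^{n-1}(\lambda-T(j,j))$. By Corollary~\ref{cor:FormylaT}, $T(j,j)=2^{-j}$ for even $j$ and $T(j,j)=0$ for odd $j$. Hence the spectrum is $\{0\}\cup\{2^{-2i}:0\le 2i\le n-1\}$, where $0$ occurs only if $n\ge 2$. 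This is automatic here, and it is also consistent with part (1), since $\eta^{(1)}\neq 0$ lies in the kernel.

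For the multiplicity statement, I will note that the diagonal entries $2^{-j}$ for distinct even $j$ are pairwise distinct positive numbers. Each nonzero value therefore appears exactly once on the diagonal of $T$. Its algebraic multiplicity as a root of the characteristic polynomial is then exactly $1$. Geometric multiplicity lies between $1$ and the algebraic multiplicity, so it is also $1$.

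Along the way I will double-check the dimension count. The number of odd $j$ in $\{0,\dots,n-1\}$ is $\lfloor n/2\rfloor$. This equals $\dim\ker M$ from part (1), so zero is in fact a semisimple eigenvalue. This is not needed for the corollary, but it is a useful consistency check.

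There is essentially no obstacle. The only care needed is to make sure the triangularity is correctly oriented: $M v^{(k)}\in W_k$ is what makes $T$ upper triangular. Given that, the characteristic-polynomial argument is immediate.
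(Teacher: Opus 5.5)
Your proposal is correct and matches the paper's argument. The paper reads the spectrum off the diagonal of the upper triangular matrix $T=[M]_{\mathcal{B}}^{\mathcal{B}}$ from Corollary~\ref{cor:FormylaT}, and simplicity of each nonzero eigenvalue follows because the diagonal entries $2^{-j}$, $j$ even, are distinct. Your extra check that $\dim\ker M=\lfloor n/2\rfloor$ equals the number of zero diagonal entries is not needed, but it is valid.
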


\subsection{Non-trivial eigenvectors of \texorpdfstring{$T$}{T}}
Recall that we index the rows and columns of $T$ by $0\leq i\leq n-1$. It will be simpler for us to index the coordinates of a vector $x\in \mathbb{R}^n$ also by $0\leq i\leq n-1$. Recall from the definition of $T$ that $T(0, 0)=1$. Since $T$ is upper triangular, it is clear that $w^{(0)}\coloneqq (1, 0, \ldots, 0)$  is an eigenvector of $T$ with eigenvalue $1$. We now construct the eigenvectors $w^{(i)}$ of $T$ corresponding to the eigenvalue $2^{-i}$ in general. Let $0\leq i\leq n-1$ be an even integer. Let us define $w^{(i)}$ so that
\[ w^{(i)}_{i+1}=\cdots = w^{(i)}_{n-1}=0\;,\]
and for $0\leq t\leq i$, let 
\begin{equation}
\label{eqn:EigenvectorDefn}
    w^{(i)}_t = 2^{i-t}B_{i-t}\binom{i}{t}\frac{(n-t-1)!}{(n-i-1)!}\;,
\end{equation}
where $B_{k}$ denotes the $k$-th Bernoulli number. Note that the definition $w^{(i)}_t$ in~\eqref{eqn:EigenvectorDefn} is valid for all $0\leq t\leq n-1$, if we interpret $\binom{i}{t}=0$ for $t>i$.

\begin{proposition}
\label{prop:EigenvaluesT}
For every even integer $0\leq i\leq n-1$, the vector $w^{(i)}$ defined in~\eqref{eqn:EigenvectorDefn}
 is an eigenvector of $T$ with eigenvalue $2^{-i}$. 
 \end{proposition}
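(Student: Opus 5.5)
The plan is to verify $Tw^{(i)}=2^{-i}w^{(i)}$ coordinate by coordinate, using the explicit entries of $T$ from Corollary~\ref{cor:FormylaT}. Set $N=n-1$. Combining the diagonal and off-diagonal formulas, for $j\le k$ we have $T(j,k)=2^{-1-j}\bigl(\delta_{j,k}+c_N(j,k)\bigr)$, and $T(j,k)=0$ for $j>k$.

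Since $w^{(i)}_k=0$ for $k>i$ and $T$ is upper triangular, $(Tw^{(i)})_j=0=2^{-i}w^{(i)}_j$ for every $j>i$. So only the rows $0\le j\le i$ need checking, and there
\[
(Tw^{(i)})_j = 2^{-1-j}\Bigl( w^{(i)}_j + \sum_{k=j}^{i} c_N(j,k)\, w^{(i)}_k \Bigr).
\]

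The key simplification is to substitute $c_N(j,k)=(-1)^j\frac{(N-j)!}{(N-k)!}\binom{k}{j}$ and $w^{(i)}_k=2^{i-k}B_{i-k}\binom{i}{k}\frac{(N-k)!}{(N-i)!}$. The factors $(N-k)!$ cancel, and $\binom{k}{j}\binom{i}{k}=\binom{i}{j}\binom{i-j}{k-j}$. Hence every term carries the common factor $A:=\binom{i}{j}\frac{(N-j)!}{(N-i)!}$, which is also the factor in $w^{(i)}_j=2^{m}B_m A$, where $m:=i-j$. Reindexing by $s=i-k$ and dividing by $A$ and by $2^{-1-j}$, the required identity becomes
\[
2^mB_m + (-1)^j\sum_{s=0}^{m}\binom{m}{s}2^{s}B_s = 2B_m .
\]
The right-hand side is $2^{-i}\cdot 2^{m}B_m\cdot 2^{1+j}=2B_m$.

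The main step is evaluating this Bernoulli sum, and I would do it with Bernoulli polynomials. Since $B_m(x)=\sum_s\binom{m}{s}B_s x^{m-s}$, we have $\sum_s\binom{m}{s}2^sB_s=2^mB_m(1/2)$. The classical value $B_m(1/2)=(2^{1-m}-1)B_m$ then gives the sum as $(2-2^m)B_m$.

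Because $i$ is even, $(-1)^j=(-1)^m$, so the left side equals $2^mB_m+(-1)^m(2-2^m)B_m$. For $m$ even this is exactly $2B_m$. For $m$ odd, either $m\ge3$, in which case $B_m=0$ and both sides vanish, or $m=1$, in which case $2-2^m=0$ and both sides equal $2B_1$.

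This proves the eigen-equation. Finally, $w^{(i)}\neq 0$ since $w^{(i)}_i=B_0=1$, so $w^{(i)}$ is a genuine eigenvector. The only nontrivial input is the midpoint value $B_m(1/2)$, which can be cited or derived from the generating function $\frac{te^{t/2}}{e^t-1}=\frac{t}{e^{t/2}-1}-\frac{t}{e^t-1}$.
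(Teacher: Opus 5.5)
Your proposal is correct and follows the paper's route. Like the paper, you verify $Tw^{(i)}=2^{-i}w^{(i)}$ coordinate by coordinate from the entries in Corollary~\ref{cor:FormylaT}, with the Bernoulli identity $\sum_{s}\binom{m}{s}2^sB_s=(2-2^m)B_m$ of Lemma~\ref{lem:IdentityBernoulli} as the key input. The differences are only in packaging, and both work in your favour: writing $T(j,k)=2^{-1-j}(\delta_{j,k}+c_{n-1}(j,k))$ collapses the paper's separate cases ($k=i$, $k=i-1$, odd $k<i-1$, even $k<i$) into the single identity $2^mB_m+(-1)^m(2-2^m)B_m=2B_m$, and you actually prove the Bernoulli identity via $B_m(1/2)=(2^{1-m}-1)B_m$, which the paper leaves unproved.
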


 We need the following lemma, which follows from the elementary properties of the Bernoulli polynomials. We skip the proof of this lemma.
\begin{lemma}
\label{lem:IdentityBernoulli}
 \[
        \sum_{k=0}^{n}\binom{n}{k}B_{k}2^k =(2-2^n)B_n\;.
\]
\end{lemma}
\begin{proof}[Proof of Proposition~\ref{prop:EigenvaluesT}]
We know that $w^{(0)}$ is an eigenvector with eigenvalue $1$. Fix an even integer $i\geq 2$, and we will verify that $(Tw^{(i)})_k=2^{-i}w^{(i)}_k$ for all $0\leq k\leq n-1$. Since $w^{(i)}_k=0$ for $t>i$, we trivially get  
\[(Tw^{(i)})_{k}=2^{-i}w^{(i)}_{k}=0, \quad k> i\;.\]
Also, since $T(i, i)=2^{-i}$, we have $(Tw^{(i)})_i = T(i, i) = 2^{-i}=2^{-i}w^{(i)}_i$. Furthermore, since $T(i-1, i-1)=0$ we obtain 
\[(Tw^{(i)})_{i-1} = T(i-1, i)w^{(i)}_i = -2^{-i}i(n-i)=2^{-i}w^{(i)}_{i-1}\;.\]
For $k<i-1$, we separately deal with two cases.

\subsection*{\texorpdfstring{Odd $k<i-1$}{Odd k less than i-1}}
Fix an odd integer $k<i-1$, and observe that 
\begin{align*}
    (Tw^{(i)})_{k} &= \sum_{m=k+1}^{i}T(k, m)w^{(i)}_m \\
    &= \sum_{m=k+1}^{i}2^{-1-k}(-1)^k\binom{m}{k}\frac{(n-1-k)!}{(n-1-m)!}2^{i-m}B_{i-m}\binom{i}{m}\frac{(n-m-1)!}{(n-i-1)!}\\
    &= -2^{-1-k}\frac{(n-k-1)!}{(n-i-1)!}\sum_{m=k+1}^{i}2^{i-m}B_{i-m}\binom{m}{k}\binom{i}{m}\\
    &= -2^{-1-k}\frac{(n-k-1)!}{(n-i-1)!}\sum_{m=k+1}^{i}2^{i-m}B_{i-m}\binom{m}{k}\binom{i}{m}
    ;.
\end{align*}
Setting $t=i-m$ and $N=i-k$, and reindexing we can rewrite
\[
\sum_{m=k+1}^{i}2^{i-m}B_{i-m}\binom{m}{k}\binom{i}{m}= \binom{i}{k}\sum_{t=0}^{N-1}2^{t}B_t\binom{N}{t}=\binom{i}{k}\sum_{t=0}^{N}2^{t}B_t\binom{N}{t},
\]
where the last equality is true bacause $N=i-j>1$ is odd and hence $B_{N}=0$. We now use Lemma~\ref{lem:IdentityBernoulli} and to conclude that $\sum_{t=0}^{N}2^{t}B_t\binom{N}{t}=(2-2^N)B_N$ which is zero because $N>1$ is odd. Therefore, we conclude that $(Tw^{(i)})_{k}=0$.

\subsection*{Even \texorpdfstring{$k<i$}{k less than i}}
Now fix an even integer $0<k<i$, and observe that 
\[
(Tw^{(i)})_{k} = T(k, k)w^{(i)}_k+\sum_{m=k+1}^{i}T(k, m)w^{(i)}_m\;.
\]
Let us define
\[ P_n(i, k) = \frac{(n-i-1)!}{(n-k-1)!}\binom{i}{k}\]
for notational simplicity. Arguing as previously (but noticing that $(-1)^k=1$ since $k$ is even), and setting $N=i-j$ we obtain 
\begin{align*}
   \sum_{m=k+1}^{i}T(k, m)w^{(i)}_m  &=  2^{-1-k}P_n(i, k)\sum_{t=0}^{N-1}2^{t}B_t\binom{N}{t},\\
   &= 2^{-1-k}P_n(i, k)\left(\sum_{t=0}^{N}2^{t}B_t\binom{N}{t} -2^{N}B_{N}\right),\\
   %&= 2^{-1-k}P_n(i, k)\left(2^{N}B_N(1/2)-2^{N}B_N\right)\\
   &=2^{-k}P_n(i, k)(1-2^N)B_N\;,
   %= 2^{-k}(1-2^{i-k}) B_{i-k} P_n(i, k).
\end{align*}
where we used Lemma~\ref{lem:IdentityBernoulli} in the last two lines. Finally, note that 
\[T(k, k)w^{(i)}_k = 2^{-k}2^{i-k}B_{i-k}P_n(i, k)\;.\]
Combining everything we obtain 
\[(Tw^{(i)})_k = B_{i-k}P_n(i, k)\Big( 2^{-k}(1-2^{i-k})+2^{-k}2^{i-k}) \Big) = 2^{-k}B_{i-k}P_n(i, k)\;.\]
The proof is complete by observing that 
\[2^{-i}w^{(i)}_k = 2^{-i}2^{i-k}B_{i-k}P_n(i, k) = 2^{-k}B_{i-k}P_n(i, k)\;,\]
and therefore $(Tw^{(i)})_{k} = 2^{-i}w^{(i)}_k$.     
\end{proof}

\begin{proof}[Proof of Theorem~\ref{thm:Main Theorem1}(3)]
Recall that $T$ is the matrix representation $x\mapsto Mx$ with respect to the falling factorial basis $\mathcal{B}$. In particular, $T$ and $M$ have the same spectrum. Furthermore, the eigenvectors of $M$ corresponding to the eigenvalue $2^{-i}$ is given by 
\[\zeta^{(i)} = Bw^{(i)}\;.\]
This completes the proof of Theorem~\ref{thm:Main Theorem1} by using the definition of $B$. 

\end{proof}
We illustrate a simple corollary of Theorem~\ref{thm:Main Theorem1} that confirms~\cite[Conjecture 1]{Clay}.
\begin{example}
Let $i=2$ and consider the eigenvalue $2^{-2}=1/4$ of $M$. The eigenvectors of $T$ are given by
    \[w^{(2)} = (x_0, x_1, x_2, 0, 0, \ldots, 0),\]
    where 
    \[ x_0=\frac{2}{3}(n-2)(n-1), \quad x_1=-2(n-2), \quad x_2=1 \;.\]
    In particular, the eigenvector of the position matrix $M$ in the standard basis is given by $v = x_0v^{(0)}+x_1v^{(1)}+v^{(2)}$. More explicitly, we obtain
    \begin{align*}
        v_{k} &= \frac{2}{3}(n-2)(n-1)-2(n-2)(k-1)+(k-1)(k-2)\\
        &= \frac{2}{3}\left(n^2-3nk+\frac{3}{2}k(k+1)-1\right).
    \end{align*}
    This is the eigenvector conjectured by Clay~\cite[Conjecture 1]{Clay} up to the scalar factor $2/3$.
\end{example}

\section{Proof of Theorem~\ref{thm: Ciucu}}
\label{sec:ProofIII}

\subsection{Diagonalization of \texorpdfstring{$M$}{M}}
Let $\mathcal{E}_{n}=\{j: 0\leq j\leq n-1,\; j\text{ is even}\}$. For each $j\in \mathcal{E}_n$, we define the (suitably normalized) left-eigenvectors $\widetilde{\zeta}^{(j)}$ of $M$  so that 
\[(\widetilde{\zeta}^{(i)})^{\top}M=2^{-i}(\widetilde{\zeta}^{(i)})^{\top}, \qquad  \text{and}\qquad 
\langle \widetilde{\zeta}^{(i)}, \zeta^{(j)}\rangle =\delta_{i, j} \text{ for all } i, j\in \mathcal E_n\;.\]
Then, we have the following eigendecomposition of $M$
\[
M = \sum_{j\in \mathcal{E}_n} 2^{-j}\; \zeta^{(j)}\otimes\widetilde{\zeta}^{(j)}\;,
\]
where $u\otimes v$ denotes the outer product of two vectors. Since $M$ is bistochastic, we know that $\widetilde{\zeta}^{(0)}=\frac{1}{n}(1, \cdots, 1)$. The normalization $\frac{1}{n}$ is chosen so that $\langle \widetilde{\zeta}^{(0)}, \zeta^{(0)}\rangle =1$. Using this, we conclude that 
\begin{equation}
\label{eqn:M^kExapnsion}
    M^{k} = \frac{1}{n}J + \sum_{j\in \mathcal{E}_n^*}2^{-jk} \zeta^{(j)}\otimes\widetilde{\zeta}^{(j)}\;.
\end{equation}

Our first step is to explicitly describe the left-eigenvectors $\widetilde{\zeta}^{(j)}$ of $M$. Recall that the matrix $T=[M]_{\mathcal{B}}^{\mathcal{B}}$. Our first lemma describes the (normalized) left-eigenvectors of $T$. It will be convenient to index the coordinates of the vectors by $0\leq t\leq n-1$.  
\begin{lemma}[Left-eignevctors of $T$]
Fix $i\in \mathcal{E}_n$. For $0\leq t\leq n-1$
\begin{equation}\label{eq:u-left-nonzero}
  \widetilde{w}^{(i)}_t
  =
  \begin{cases}
    0, & t<i,\\[4pt]
    \displaystyle
    \frac{1}{t-i+1}\binom{t}{i}\,\frac{(n-1-i)!}{(n-1-t)!},
      & t\ge i.
  \end{cases}
\end{equation}
Then, $(\widetilde{w}^{(i)})^{\top}T=2^{-i}(\widetilde{w}^{(i)})^{\top}$. Furthermore, $\langle \widetilde{w}^{(i)}, {w}^{(j)}\rangle=\delta_{i, j}$ for any $i, j\in \mathcal{E}_n$. 
\end{lemma}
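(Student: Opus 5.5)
The plan is to verify the left-eigenvector equation directly, coordinate by coordinate, using the explicit entries of $T$ from Corollary~\ref{cor:FormylaT}. Biorthogonality will then follow from the triangular supports of the vectors and the standard argument for distinct eigenvalues. Throughout I fix $i\in\mathcal E_n$ and index coordinates by $0\le t\le n-1$.

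\emph{Step 1: the equation $(\widetilde w^{(i)})^\top T=2^{-i}(\widetilde w^{(i)})^\top$.} Since $T$ is upper triangular, the $k$-th coordinate of the left-hand side is $\sum_{j\le k}\widetilde w^{(i)}_j T(j,k)$.

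For $k<i$ every term vanishes, because $\widetilde w^{(i)}_j=0$ for $j<i$.

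For $k=i$ only the diagonal term survives. Since $i$ is even, $T(i,i)=2^{-i}$ and $\widetilde w^{(i)}_i=1$, so this coordinate equals $2^{-i}=2^{-i}\widetilde w^{(i)}_i$.

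For $k>i$ I write $T(k,k)=2^{-k-1}(-1)^k+2^{-k-1}$. The first summand agrees with the off-diagonal formula for $T(j,k)$ evaluated at $j=k$, so the diagonal contributes one extra term $2^{-k-1}\widetilde w^{(i)}_k$. Substituting the formulas and using $\binom{j}{i}\binom{k}{j}=\binom{k}{i}\binom{k-i}{j-i}$, the factorials telescope:
\[
\frac{(n-1-i)!}{(n-1-j)!}\cdot\frac{(n-1-j)!}{(n-1-k)!}=\frac{(n-1-i)!}{(n-1-k)!}.
\]
Put $N=k-i$ and $s=j-i$, and use $(-1)^i=1$. After dividing by the common factor $\binom{k}{i}(n-1-i)!/(n-1-k)!$, the required identity becomes
\[
2^{-1-i}\sum_{s=0}^{N}\binom{N}{s}\frac{(-1/2)^s}{s+1}+\frac{2^{-k-1}}{N+1}=\frac{2^{-i}}{N+1}.
\]

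The only real computation is evaluating this sum. I would do it with the integral identity
\[
\sum_{s=0}^N\binom{N}{s}\frac{x^s}{s+1}=\frac{(1+x)^{N+1}-1}{(N+1)x},
\]
obtained by integrating $(1+x)^N$. At $x=-1/2$ the sum equals $2(1-2^{-N-1})/(N+1)$. Multiplying by $2^{-1-i}$ gives $2^{-i}(1-2^{-N-1})/(N+1)$. Adding $2^{-k-1}/(N+1)=2^{-i}2^{-N-1}/(N+1)$ gives exactly $2^{-i}/(N+1)$, as required.

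\emph{Step 2: biorthogonality.} The vector $w^{(j)}$ is supported on $\{0,\dots,j\}$, and $\widetilde w^{(i)}$ is supported on $\{i,\dots,n-1\}$. Hence:
\begin{itemize}
\item If $j<i$, the supports are disjoint and $\langle \widetilde w^{(i)},w^{(j)}\rangle=0$.
\item If $j=i$, the only common coordinate is $t=i$. There $w^{(i)}_i=B_0=1$ and $\widetilde w^{(i)}_i=1$, so the pairing equals $1$.
\item If $j>i$, compute $(\widetilde w^{(i)})^\top T w^{(j)}$ in two ways. Applying Step 1 on the left gives $2^{-i}\langle\widetilde w^{(i)},w^{(j)}\rangle$. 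Applying Proposition~\ref{prop:EigenvaluesT} on the right gives $2^{-j}\langle\widetilde w^{(i)},w^{(j)}\rangle$. Since $2^{-i}\neq 2^{-j}$, the pairing is $0$.
\end{itemize}

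The main obstacle is the bookkeeping in Step 1. One must separate the diagonal entry of $T$ into its "generic" part and the extra $2^{-k-1}$, so that the binomial sum closes up via the integral identity.
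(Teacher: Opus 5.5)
Your proposal is correct and follows essentially the same route as the paper. Both check the left-eigenvector equation coordinatewise, reduce it to a binomial sum evaluated by integrating $(1+x)^N$ at $x=-1/2$, and get biorthogonality from the distinct-eigenvalue argument together with $\langle \widetilde w^{(i)},w^{(i)}\rangle=\widetilde w^{(i)}_i w^{(i)}_i=1$. The one difference is bookkeeping: your split $T(k,k)=2^{-k-1}(-1)^k+2^{-k-1}$ completes the sum and avoids the paper's truncated sum with the parity indicator $\mathbf 1\{u\text{ even}\}$.
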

\begin{proof}
Fix $i\in \mathcal{E}_n$ and $0\leq u\leq n-1$. It is immediate from~\eqref{eq:u-left-nonzero} and the description of $T$ in Corollary~\ref{cor:FormylaT} that
 \[ ((\widetilde{w}^{(i)})^{\top}T)_{u} =\begin{cases}
     0, &\quad  u<i,\\
     2^{-i}=2^{-i}w^{(i)}_i, &\quad u=i, \\
    \sum_{t=i}^{u} \widetilde{w}^{(i)}_t T(t, u), & \quad u>i\;.
 \end{cases}\]
 Now assume that $u>i$. For $t\leq u-1$, we have
\begin{align*}
   \widetilde{w}^{(i)}_t T(t, u) %&=  \frac{1}{t-i+1}\binom{t}{i}\frac{(n-1-i)!}{(n-1-t)!}\;2^{-1-t}(-1)^t \frac{(n-1-t)!}{(n-1-u)!}\binom{u}{t}\\
   &= 2^{-1-t}(-1)^t\frac{1}{t-i+1}\frac{(n-1-i)!}{(n-1-u)!}\binom{t}{i}\binom{u}{t}\\
   &= 2^{-1-t}(-1)^t\frac{1}{t-i+1}\frac{(n-1-i)!}{(n-1-u)!}\binom{u}{i}\binom{u-i}{t-i}\;.
\end{align*}
And, 
\[ \widetilde{w}^{(i)}_u T(u, u)=\begin{cases}
    0, & \qquad u \text{ is odd}, \\
    \frac{2^{-u}}{u-i+1}\binom{u}{i}\,\frac{(n-1-i)!}{(n-1-u)!}, &\qquad u \text{ is even}.
\end{cases}\]
Summing over $t$, and comparing with $2^{-i}\widetilde{w}^{(i)}_u$, we conclude that it suffices to show that 
\begin{equation}
\label{eq:NeedEigenvector}
 \sum_{t=i}^{u-1}\frac{2^{-1-t}(-1)^t}{t-i+1}\binom{u-i}{t-i}+ \frac{2^{-u}}{u-i+1}\mathrm{1}\{u \text{ is even}\} = \frac{2^{-i}}{u-i+1}\;.   
\end{equation}
Using the fact that $(-1)^i=i$ since $i$ is even, and reindexing the sum, and writing $m=u-i$,~\eqref{eq:NeedEigenvector} is equivalent to 
\begin{equation}
\label{eq:FindalNeeded}
    \frac{m+1}{2}\sum_{r=0}^{m-1}\binom{m}{r}\frac{(-1)^r}{2^r}\frac{1}{r+1}
+\mathbf 1_{\{m\ \mathrm{even}\}}2^{-m}
=1
\end{equation}

To this end, we observe that 
\begin{align*}
  \sum_{r=0}^{m-1}\binom{m}{r}\frac{(-1)^r}{2^r}\frac{1}{r+1} &= \int_0^1\Big(1-\frac{x}{2}\Big)^m dx-\int_0^1\Big(-\frac{x}{2}\Big)^m dx\\
  &=\frac{2\big(1-2^{-(m+1)}\big)}{m+1}-\frac{(-1)^m 2^{-m}}{m+1}.
\end{align*}
In particular, the LHS in~\eqref{eq:FindalNeeded} simplified to 
\[1-2^{-(m+1)}(1+(-1)^m)+2^{-m}\;\mathrm{1}\{m\ \mathrm{even}\}\;,\]
which is always $1$. This proves that $\widetilde{w}^{(i)}$ is a left-eigenvector of $M$ with eigenvalue $2^{-i}$. 

For any $i, j\in \mathcal{E}_n$, note that by definition $Tw^{(j)}=2^{-j}w^{(j)}$ and $T^{\top}\widetilde{w}^{(i)}=2^{-i}\widetilde{w}^{(i)}$. In particular, for $i\neq j$ we have
\[2^{-j}\langle \widetilde{w}^{(i)}, w^{(j)}\rangle=\langle \widetilde{w}^{(i)}, Tw^{(j)}\rangle = \langle T^{\top} \widetilde{w}^{(i)}, w^{(j)}\rangle=2^{-i}\langle \widetilde{w}^{(i)}, w^{(j)}\rangle\;. \]
Therefore, $\langle \widetilde{w}^{(i)}, w^{(j)}\rangle=0$. On the other hand, $\langle \widetilde{w}^{(i)}, w^{(i)}\rangle=\widetilde{w}^{(i)}_iw^{(i)}_i=1$. This completes the proof. 
\end{proof}

By a change of basis, it is immediate that $\widetilde\zeta^{(i)}=(\widetilde{w}^{(i)})^\top B^{-1}$ is a left-iegnevctor of $M$ with eigenvaleu $2^{-i}$ for $i\in \mathcal{E}_n$. Furthermore, it is easily checked that $\{\widetilde\zeta^{(i)}: i\in \mathcal{E}_n\}$ is dual to $\{\zeta^{(i)}: i\in \mathcal{E}_n\}$. We now compute $\widetilde\zeta^{(i)}(a)$ explicitly in a closed form, which will be useful later to get an $\ell_{\infty}$ bound on $\widetilde{\zeta}^{(i)}$.

\begin{theorem}[Explicit left eigenvectors of $M$]
\label{lem:explicit-left-M}
Let $n\ge 2$ and let $i\in \mathcal E_n$. For each
$1\le a\le n$, the $a$th coordinate of the left eigenvector
$\widetilde\zeta^{(i)}=(\widetilde{w}^{(i)})^\top B^{-1}$ is given by
\begin{equation}\label{eq:zeta-tilde-explicit}
\widetilde\zeta^{(i)}(a)
= \frac{1}{i!(n-i)}\Bigl[
   (-1)^{\,i-a}\binom{i-1}{a-1}
 + (-1)^{\,n-1-a}\binom{i-1}{a-1-(n-i)}
 \Bigr],
\end{equation}
where, as usual, $\binom{m}{r}=0$ if $r<0$ or $r>m$.

\end{theorem}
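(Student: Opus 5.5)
The plan is a direct computation: expand $\widetilde\zeta^{(i)}=(\widetilde{w}^{(i)})^\top B^{-1}$ using the explicit $B^{-1}$ of Lemma~\ref{lem:B-inverse}, collapse the products of factorials into one binomial coefficient, and evaluate the remaining alternating sum with a finite-difference identity.

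\textbf{Step 1: write out the product.} The rows of $B^{-1}$ are indexed by $k=t+1$ when the coordinates of $\widetilde{w}^{(i)}$ are indexed by $0\le t\le n-1$. Lemma~\ref{lem:B-inverse} then gives
\[
B^{-1}(t+1,a)=(-1)^{t+1-a}\,\frac{1}{t!}\binom{t}{a-1}.
\]
Combining this with \eqref{eq:u-left-nonzero}, I get
\[
\widetilde\zeta^{(i)}(a)=\sum_{t=i}^{n-1}\frac{1}{t-i+1}\binom{t}{i}\frac{(n-1-i)!}{(n-1-t)!}\,(-1)^{t+1-a}\frac{1}{t!}\binom{t}{a-1}.
\]

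\textbf{Step 2: simplify the factorials.} I will use $\binom{t}{i}/t!=1/(i!(t-i)!)$ and $(t-i+1)\,(t-i)!=(t-i+1)!$. With these, the $t$-dependent factorials become
\[
\frac{1}{(t-i+1)!\,(n-1-t)!}=\frac{1}{(n-i)!}\binom{n-i}{t-i+1}.
\]
Since $(n-1-i)!/(n-i)!=1/(n-i)$, the prefactor $1/(i!(n-i))$ of \eqref{eq:zeta-tilde-explicit} appears at once. Substituting $s=t-i+1$ and $K=n-i$ then leaves
\[
\widetilde\zeta^{(i)}(a)=\frac{\pm(-1)^{i-a}}{i!\,(n-i)}\sum_{s=1}^{K}(-1)^{s}\binom{K}{s}\binom{s+i-1}{a-1}.
\]

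\textbf{Step 3: evaluate the alternating sum.} I will extend the sum to start at $s=0$ and subtract the $s=0$ term, which is $\binom{i-1}{a-1}$; this produces the first binomial in \eqref{eq:zeta-tilde-explicit}. The full sum is a $K$-th finite difference of the polynomial $s\mapsto\binom{s+i-1}{a-1}$ in $s$. It is evaluated by the standard identity
\[
\sum_{s=0}^{K}(-1)^s\binom{K}{s}\binom{s+c}{r}=(-1)^K\binom{c}{r-K}.
\]
To prove this identity I will iterate Pascal's rule $\Delta\binom{s+c}{r}=\binom{s+c}{r-1}$ $K$ times and evaluate at $s=0$; alternatively, I can compare coefficients of $x^r$ in $(1+x)^c\bigl(1-(1+x)\bigr)^K$. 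Taking $c=i-1$, $r=a-1$ and $K=n-i$ yields the second term $\binom{i-1}{a-1-(n-i)}$, carrying the sign $(-1)^{n-i}$. That sign combines with $(-1)^{i-a}$ to give a power of $(-1)$ depending only on $n-a$.

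\textbf{Main obstacle: the global sign.} The real difficulty is getting the overall sign to match $(-1)^{i-a}$ and $(-1)^{n-1-a}$ exactly as stated. A careless pass produces the negative of \eqref{eq:zeta-tilde-explicit}, so the relative sign between the two terms checks out but the global sign depends on the parity conventions. I will therefore recheck the sign convention of $B^{-1}$ directly against $B(k,\ell)=(k-1)^{\underline{\ell-1}}$ on small cases, for example $n=2,3$. As a normalization check I will also use $\langle\widetilde\zeta^{(i)},\zeta^{(i)}\rangle=1$, which is inherited from $\langle \widetilde{w}^{(i)},w^{(i)}\rangle=1$. The case $i=0$, where $\binom{-1}{\cdot}$ appears, will be handled separately using $\widetilde\zeta^{(0)}=\frac1n(1,\dots,1)$.
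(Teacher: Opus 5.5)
Your method is essentially the paper's: expand $(\widetilde w^{(i)})^\top B^{-1}$, shift the index, and evaluate a single alternating sum. The paper handles the weight $1/(s+1)$ through the integral $G(t)=\frac{1}{m!}\int_0^1(1-ty)^m\,dy$. You instead absorb it into $\binom{n-i}{t-i+1}$ and reduce to a plain $K$-th finite difference, which is a slightly cleaner route to the same sum.

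The weak point is that you treat the global sign as unsettled. There is no $\pm$ in Step 2: the sign is exactly $(-1)^{t+1-a}=(-1)^{s+i-a}$. Completing Step 3 with your identity then gives
\[
\widetilde\zeta^{(i)}(a)=\frac{1}{i!(n-i)}\Bigl[(-1)^{\,i+1-a}\binom{i-1}{a-1}+(-1)^{\,n-a}\binom{i-1}{a-1-(n-i)}\Bigr],
\]
which is the negative of \eqref{eq:zeta-tilde-explicit}.

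This is the correct value, and the printed statement is off by a global factor $-1$. It is not a matter of convention: $B^{-1}$ and $\widetilde w^{(i)}$ are fixed, and $\widetilde\zeta^{(i)}$ is pinned down by $\langle\widetilde\zeta^{(i)},\zeta^{(i)}\rangle=1$. Take $n=3$, $i=2$. Then $\widetilde w^{(2)}=(0,0,1)$, so $\widetilde\zeta^{(2)}$ is the last row of $B^{-1}$, namely $(\tfrac12,-1,\tfrac12)$; indeed $(1,-2,1)M=\tfrac14(1,-2,1)$. With $\zeta^{(2)}=(\tfrac43,-\tfrac23,-\tfrac23)$ the pairing is $1$. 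The printed formula instead gives $(-\tfrac12,1,-\tfrac12)$, whose pairing with $\zeta^{(2)}$ is $-1$. The paper's proof reaches the printed sign only through a slip. After setting $t=i+s$ and $u=a-1$, it factors $(-1)^{i+s+1-a}$ as $(-1)^{s}(-1)^{i+1-u}$, but $i+s+1-a=i+s-u$, so the factor should be $(-1)^{i-u}$.

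So do not look for a convention that restores the printed sign. Trust your computation (your check on small $n$ and the normalization check will both confirm it) and record the corrected formula. Nothing downstream changes, since Proposition~\ref{prop:left-linf} and Theorem~\ref{thm: Ciucu} only use $|\widetilde\zeta^{(i)}(a)|$.

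On the case $i=0$: under the stated convention $\binom{-1}{r}=0$, the printed formula gives $\widetilde\zeta^{(0)}=0$, so your separate treatment is genuinely needed. Alternatively, read $\binom{s-1}{a-1}$ as a polynomial in $s$, so that $\binom{-1}{a-1}=(-1)^{a-1}$. Then your Step 3 goes through verbatim and the corrected formula returns $1/n$, while the printed one returns $-1/n$, which again confirms the sign error.
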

\begin{proof}
For $i\in \mathcal{E}_n$ and $1\leq a\leq n$, we have
\[
\widetilde\zeta^{(i)}(a)
= \sum_{t=0}^{n-1} \widetilde{w}^{(i)}_t\, (B^{-1})(t+1,a).
\]
Since $w^{e,(i)}_t=0$ for $t<i$ and $(B^{-1})(t+1,a)=0$ for $t+1<a$, we get
\begin{equation}\label{eq:tilde-zeta-start}
\widetilde\zeta^{(i)}(a)
= \sum_{t=\max\{i,a-1\}}^{n-1}
   \frac{1}{t-i+1}\binom{t}{i}\frac{(n-1-i)!}{(n-1-t)!}
   \frac{(-1)^{t+1-a}}{t!}\binom{t}{a-1}.
\end{equation}

Set $N:=n-1$ and $m:=N-i=n-1-i$. Write $u:=a-1$.
We reindex the sum by $t=i+s$, where $0\le s\le m$. Note that
$(N-t)!=(m-s)!$ and
\[
\frac{1}{t-i+1}\binom{t}{i}\frac{1}{t!}
= \frac{1}{s+1}\cdot\frac{(i+s)!}{i!\,s!}\cdot\frac{1}{(i+s)!}
= \frac{1}{(s+1)\,i!\,s!}.
\]
Thus \eqref{eq:tilde-zeta-start} becomes
\[
\widetilde\zeta^{(i)}(a)
= \frac{(N-i)!}{i!}
  \sum_{s=\max\{0,u-i\}}^{m}
  \frac{(-1)^{i+s+1-a}}{(s+1)\,s!\,(m-s)!}\binom{i+s}{u}.
\]
Define
\[
S(u) := \sum_{s=0}^{m}
  \frac{(-1)^s}{(s+1)\,s!\,(m-s)!}\binom{i+s}{u}\;.
\]
Note that $\binom{i+s}{u}$ vanishes for $u>i+s$, therefore we can write
\[
\widetilde\zeta^{(i)}(a)
= (-1)^{i+1-u}\frac{(N-i)!}{i!}\,S(u),\qquad u=a-1.
\]
We evaluate $S(u)$ via a generating function. Using
$\binom{i+s}{u}=[x^u](1+x)^{i+s}$, we get
\begin{align*}
  S(u)
&= [x^u]\sum_{s=0}^{m}
   \frac{(-1)^s}{(s+1)\,s!\,(m-s)!}(1+x)^{i+s} \\ &=[x^u]\bigl[(1+x)^i G(1+x)\bigr],
\end{align*}
where
\begin{align*}
    G(t) &:= \sum_{s=0}^{m}\frac{(-1)^s t^s}{(s+1)\,s!\,(m-s)!}  = \int_0^1 \sum_{s=0}^{m}\frac{(-1)^s(ty)^s}{s!\,(m-s)!}\,dy\\
&= \frac{1}{m!}\int_0^1 (1-ty)^m\,dy = \frac{1}{m!}\frac{1 - (1-t)^{m+1}}{t(m+1)} = \frac{1}{(m+1)!}\cdot\frac{1-(1-t)^{m+1}}{t}.
\end{align*}

Now we evaluate $G$ at $t=1+x$:
\[
G(1+x)
= \frac{1}{(m+1)!}\cdot\frac{1-(1-(1+x))^{m+1}}{1+x}
= \frac{1}{(m+1)!}\cdot\frac{1-(-x)^{m+1}}{1+x}.
\]
Thus,
\[
(1+x)^i G(1+x)
= \frac{1}{(m+1)!}(1+x)^{i-1}\bigl(1-(-x)^{m+1}\bigr).
\]
Taking coefficients of $x^u$, yields
\[
S(u)
= \frac{1}{(m+1)!}\Bigl(
     [x^u](1+x)^{i-1}
   - [x^u](1+x)^{i-1}(-x)^{m+1}
   \Bigr).
\]
The two terms are
\[
[x^u](1+x)^{i-1} = \binom{i-1}{u},
\qquad
[x^u](1+x)^{i-1}(-x)^{m+1}
= (-1)^{m+1}\binom{i-1}{u-(m+1)},
\]
with the convention $\binom{i-1}{r}=0$ if $r<0$ or $r>i-1$. Therefore
\[
S(u)
= \frac{1}{(m+1)!}
   \Bigl[\binom{i-1}{u}
         - (-1)^{m+1}\binom{i-1}{u-(m+1)}\Bigr].
\]

Recall $N=n-1$ and $m=N-i=n-1-i$, so
\[
\frac{(N-i)!}{(m+1)!} = \frac{m!}{(m+1)!} = \frac{1}{m+1} = \frac{1}{n-i}.
\]
Putting everything together,
\[
\widetilde\zeta^{(i)}(a)
= (-1)^{i+1-u}\frac{(N-i)!}{i!}\,S(u)
= \frac{(-1)^{i+1-u}}{i!(n-i)}
  \Bigl[\binom{i-1}{u}
        - (-1)^{m+1}\binom{i-1}{u-(m+1)}\Bigr].
\]
Using $u=a-1$ and $m+1=n-i$, gives~\eqref{eq:zeta-tilde-explicit}.
\end{proof}

\subsection{Enrtywise bounds on \texorpdfstring{$\zeta^{(i)}$}{zeta(i)} and \texorpdfstring{$\widetilde{\zeta}^{(i)}$}{tilde zeta(i)}}
We will now establish the bounds on the $\ell_{\infty}$ norm of $\zeta^{(i)}$ and $\widetilde{\zeta}^{(i)}$ for $i\in \mathcal{E}_n^{*}:=\mathcal{E}_n\setminus \{0\}$. Before we state our first bound, we recall two standard facts about Bernoulli numbers (see, e.g.,~\cite{GKP} for proofs).
\begin{lemma}[Basic properties of Bernoulli numbers]\label{lem:Bernoulli-basic}
Let $(B_s)_{s\ge 0}$ be the Bernoulli numbers, defined by
\[
  \frac{z}{e^z - 1}
  = \sum_{s=0}^\infty B_s \frac{z^s}{s!},\qquad |z|<2\pi.
\]
Then:
\begin{enumerate}
  \item $B_0=1$, $B_1=-\frac12$, and $B_{2m+1}=0$ for all $m\ge 1$.
  \item For all integers $m\ge 1$,
  \[
    |B_{2m}|
    \le \frac{2(2m)!}{(2\pi)^{2m}}.
  \]
\end{enumerate}
\end{lemma}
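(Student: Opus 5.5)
Both parts are standard facts, and I will derive them directly from the generating function $z/(e^z-1)$.

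For part (1), the values $B_0=1$ and $B_1=-\tfrac12$ come from expanding
\[
\frac{z}{e^z-1}=\frac{1}{1+z/2+z^2/6+\cdots}=1-\frac z2+O(z^2).
\]
For the vanishing of odd-index terms, I will add $z/2$ to the generating function:
\[
\frac{z}{e^z-1}+\frac z2=\frac z2\cdot\frac{e^z+1}{e^z-1}=\frac z2\coth\frac z2 .
\]
The right-hand side is an even function of $z$. Hence every coefficient of an odd power $z^{2m+1}$ with $m\ge1$ vanishes, which gives $B_{2m+1}=0$. The term $z/2$ that I added accounts exactly for $B_1$.

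For part (2), I will use Euler's formula
\[
B_{2m}=\frac{(-1)^{m+1}2(2m)!}{(2\pi)^{2m}}\zeta(2m),\qquad m\ge 1 .
\]
I plan to obtain it from the partial fraction expansion
\[
\frac z2\coth\frac z2=1+\sum_{k\ge1}\frac{2z^2}{z^2+4\pi^2k^2}.
\]
Expanding each summand as a geometric series in $z^2/(4\pi^2k^2)$, which is valid for $|z|<2\pi$, and comparing coefficients of $z^{2m}$ with the series from part (1) yields the formula. This gives
\[
|B_{2m}|=\frac{2(2m)!}{(2\pi)^{2m}}\zeta(2m).
\]

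The main obstacle is the constant. Since $\zeta(2m)>1$, this identity yields only $|B_{2m}|\le \frac{2(2m)!}{(2\pi)^{2m}}\zeta(2)$, with $\zeta(2)=\pi^2/6$. The bound as stated, with no $\zeta(2m)$ factor, is therefore slightly false: for example $|B_2|=1/6>2\cdot 2/(4\pi^2)\approx0.101$. I would record and use the corrected bound
\[
|B_{2m}|\le \frac{2\zeta(2)(2m)!}{(2\pi)^{2m}}\le\frac{4(2m)!}{(2\pi)^{2m}}.
\]
This differs from the stated bound only by an absolute constant, so it is harmless in the later $\ell_\infty$ estimates on $\zeta^{(i)}$.
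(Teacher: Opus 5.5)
Your proposal is correct, and so is its main point: part (2) is false as stated. It fails for every $m\ge 1$, not only at $m=1$, because the exact identity $|B_{2m}|=\frac{2(2m)!}{(2\pi)^{2m}}\zeta(2m)$ together with $\zeta(2m)>1$ gives the strict reverse inequality (for instance, also $|B_4|=\frac{1}{30}>\frac{3}{\pi^4}$). The paper gives no argument of its own for this lemma, only a pointer to \cite{GKP}, where this Euler identity is proved; the bound in the statement has simply lost the factor $\zeta(2m)$. Your derivation of (1) from the evenness of $\frac{z}{2}\coth\frac{z}{2}$, and of the Euler identity from its partial-fraction expansion, is the standard route. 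The only detail to add is that $\sum_{k\ge1}\sum_{j\ge1}2\bigl(\frac{|z|^2}{4\pi^2k^2}\bigr)^{j}<\infty$ for $|z|<2\pi$, which justifies interchanging the two sums before comparing coefficients of $z^{2m}$.

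Your claim that the correction is harmless downstream also checks out. Using $2^{2m}|B_{2m}|\le 2\zeta(2)\,(2m)!\,\pi^{-2m}$ in the proof of Lemma~\ref{lem:A-i-bound} gives $A_i\le 1+i+\frac{2\zeta(2)\,i!}{\pi^2-1}\le 4\,i!$ for every $i\ge 2$. That bound is all that Proposition~\ref{prop:right-linf}, and hence Theorem~\ref{thm: Ciucu}, require. (Separately from your correction, the final inequality $A_i\le i!$ at the end of that proof fails at $i=2$, where $A_2=\frac{11}{3}$; again, only $A_i\le 4\,i!$ is needed.)
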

We note the following immediate consequence of
Lemma~\ref{lem:Bernoulli-basic} for later use.
\begin{lemma}\label{lem:A-i-bound}
For $i\geq 2$, define \[
  A_i := |B_0| + 2|B_1|\binom{i}{1}
        + \sum_{\substack{s\ge 2\\ s\le i}} 2^{s}|B_s|\binom{i}{s}
      = 1 + i + \sum_{s=2}^{i} 2^{s}|B_s|\binom{i}{s}.
\]
For every integer $i\ge 2$,
\[
  A_i \le 4\,i!.
\]
\end{lemma}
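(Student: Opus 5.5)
We need to prove $A_i \le 4\, i!$ for every $i\ge 2$. We use Lemma~\ref{lem:Bernoulli-basic}: the odd-indexed Bernoulli numbers with index at least $3$ vanish, and $|B_{2m}|\le 2(2m)!/(2\pi)^{2m}$. So only even $s=2m\ge 2$ contribute to the sum, and
\[
2^{2m}|B_{2m}|\binom{i}{2m}\le 2\,\frac{(2m)!}{\pi^{2m}}\binom{i}{2m} = 2\,\frac{i!}{(i-2m)!\,\pi^{2m}}\le 2\,\frac{i!}{\pi^{2m}}.
\]
Summing over $m\ge 1$ gives
\[
\sum_{s=2}^i 2^s|B_s|\binom{i}{s}\le 2\,i!\sum_{m\ge1}\pi^{-2m} = \frac{2\,i!}{\pi^2-1}\le \frac{2\,i!}{8.8}<\frac{i!}{4}.
\]

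It remains to bound the first two terms, $1+i\le 3\,i!$ for $i\ge 2$. This holds because $i!\ge i\ge 2$, so $1+i\le 2i\le 2\,i!$. Adding the two estimates yields $A_i\le 2\,i!+\tfrac14 i!\le 4\,i!$.

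There is no real obstacle here. The only care needed is to use the factor $2^s$ against the $(2\pi)^{2m}$ in the Bernoulli bound, so that the tail becomes the convergent geometric series $\sum_m \pi^{-2m}$, and to drop the factor $1/(i-2m)!\le 1$ rather than keeping the binomial coefficient.
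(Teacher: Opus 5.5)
Your proof is correct and matches the paper's argument: discard the odd indices, use $2^{2m}|B_{2m}|\le 2(2m)!\,\pi^{-2m}$, drop the factor $1/(i-2m)!\le 1$, and sum the geometric series to get $2\,i!/(\pi^2-1)$. Your final step $A_i\le 2\,i!+\tfrac14 i!\le 4\,i!$ is also more careful than the paper's, which asserts the stronger bound $A_i\le i!$; that bound fails at $i=2$, where $A_2=11/3$, although the stated bound $4\,i!$ still holds.
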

\begin{proof}
By Lemma~\ref{lem:Bernoulli-basic}, for $m\geq 1$ we have

\[
  2^{2m}|B_{2m}|
  \le 2^{2m}\cdot\frac{2(2m)!}{4^m\pi^{2m}}
  = 2\,(2m)!\,\pi^{-2m}.
\]
Since $B_s=0$ for odd $s\ge 3$, only even $s=2m$ with $1\le m\le\lfloor i/2\rfloor$
remain in the sum. Therefore
\[
  \sum_{s\ge 2} 2^{s}|B_s|\binom{i}{s}
  \le 2\sum_{m=1}^{\lfloor i/2\rfloor} (2m)!\,\binom{i}{2m}\,\pi^{-2m}\leq \sum_{m=1}^{\lfloor i/2\rfloor} \frac{2\,i!}{\pi^{2m}(i-2m)!}\;.
\]

Using $(i-2m)!\geq 1$ and bounding the sum by the infinite geometric sum, we conclude 
\[
  \sum_{s\ge 2} 2^{s}|B_s|\binom{i}{s}
  \leq 
  = \frac{2\,i!}{\pi^2-1}.
\]
Combining this with the $s=0,1$ terms gives
\[
  A_i
  \le 1 + i + \frac{2\,i!}{\pi^2-1}\leq i!\;.
\]
\end{proof}

\begin{proposition}[Right eigenvector $\ell_\infty$--bound]\label{prop:right-linf}
Let $n\ge 2$ and let $i\in E_n$ be even with $0\le i\le n-1$. Then for all
$1\le k\le n$,
\begin{equation}\label{eq:zeta-right-linf}
  |\zeta^{(i)}(k)|
  \le
  \begin{cases}
    1, & i=0,\\[4pt]
    4\,i!\,n^i, & i\ge 2.
  \end{cases}
\end{equation}
In particular, for every even $i\ge 2$ and $n\ge i+1$,
\[
  \|\zeta^{(i)}\|_{\infty} \le 4\,i!\,n^i.
\]
\end{proposition}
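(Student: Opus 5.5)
The plan is to bound $\zeta^{(i)}(k)$ term by term from its explicit expansion in Theorem~\ref{thm:Main Theorem1}(3),
\[
\zeta^{(i)}(k)=\sum_{t=0}^{i} w^{(i)}_t (k-1)^{\underline{t}},\qquad w^{(i)}_t=2^{i-t}B_{i-t}\binom{i}{t}(n-t-1)^{\underline{i-t}},
\]
and then to appeal to Lemma~\ref{lem:A-i-bound}.

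The case $i=0$ is immediate. Indeed, $\zeta^{(0)}\equiv 1$ by the remark following Theorem~\ref{thm:Main Theorem1}, so $|\zeta^{(0)}(k)|=1$.

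For even $i\ge 2$ and $1\le k\le n$, I will use two crude estimates. First, $0\le (k-1)^{\underline{t}}\le (n-1)^t\le n^t$. Second, $0\le (n-t-1)^{\underline{i-t}}\le n^{i-t}$; this is valid because $n\ge i+1$ guarantees $n-t-1\ge i-t$, so the falling factorial is a product of $i-t$ nonnegative factors, each at most $n$. Multiplying these gives
\[
|w^{(i)}_t|\,(k-1)^{\underline{t}}\le 2^{i-t}|B_{i-t}|\binom{i}{t}\,n^{i}.
\]
Substituting $s=i-t$ and using $\binom{i}{t}=\binom{i}{s}$, I sum over $t$ to get
\[
|\zeta^{(i)}(k)|\le n^i\sum_{s=0}^{i}2^{s}|B_s|\binom{i}{s}.
\]
The $s=0$ term is $|B_0|=1$ and the $s=1$ term is $2|B_1|\,i=i$, so the sum is exactly $A_i$ as defined in Lemma~\ref{lem:A-i-bound}. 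That lemma gives $A_i\le 4\,i!$, and hence $|\zeta^{(i)}(k)|\le 4\,i!\,n^i$. Taking the maximum over $k$ yields the stated bound on $\|\zeta^{(i)}\|_\infty$.

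I do not expect a genuine obstacle. The one point needing care is that the falling-factorial estimates require nonnegative arguments. This holds because $k-1\ge 0$ (with $(k-1)^{\underline{t}}=0$ when $t>k-1$) and because $n-1-t\ge i-t\ge 0$. The remaining step is simply to match the resulting sum with the quantity $A_i$ controlled in Lemma~\ref{lem:A-i-bound}.
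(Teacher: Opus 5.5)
Your proof is correct and follows the paper's argument essentially verbatim. You expand $\zeta^{(i)}(k)$ in the falling-factorial basis, bound $(k-1)^{\underline{t}}\le n^t$ and $(n-t-1)^{\underline{i-t}}\le n^{i-t}$ term by term, reindex to get $n^iA_i$, and invoke Lemma~\ref{lem:A-i-bound}. Your write-up also treats $i=0$ explicitly and correctly uses the lemma's stated bound $A_i\le 4\,i!$, whereas the paper's last line writes $A_i\le i!$, which already fails at $i=2$ since $A_2=11/3$.
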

\begin{proof}
    Fix $i\in \mathcal{E}_n^{*}$. For $1\le k\le n$:
\[
  \zeta^{(i)}(k)
  = \sum_{t=0}^{i}
      2^{\,i-t} B_{i-t}\binom{i}{t}
      \frac{(n-t-1)!}{(n-i-1)!}\,(k-1)_t.
\]
We first bound each term in absolute value. For $1\le k\le n$ and
$0\le t\le i$,
\[
  |(k-1)_t|
  \le (k-1)^t \le n^t,
\]
and
\[
  \frac{(n-t-1)!}{(n-i-1)!}
  = (n-i-1)\cdots (n-t-1)
  \le n^{\,i-t}.
\]
Thus
\[
  \bigl|2^{\,i-t} B_{i-t}\binom{i}{t}
        \frac{(n-t-1)!}{(n-i-1)!}\,(k-1)_t\bigr|
  \le 2^{\,i-t} |B_{i-t}|\binom{i}{t}\,n^{\,i}.
\]
Re-indexing with $s=i-t$ gives
\[
  |\zeta^{(i)}(k)|
  \le n^i \sum_{s=0}^{i} 2^{s}|B_{s}|\binom{i}{s}
  =: n^i\,A_i\leq n^{i}\,i!
\]
where $A_i$ is as defined in Lemma~\ref{lem:A-i-bound}.
\end{proof}

\begin{proposition}[Left-eigenvector $\ell_{\infty}$-bound]
\label{prop:left-linf}
Let $n\ge 2$ and let $i\in \mathcal{E}_n$. Then
\begin{equation}\label{eq:left-linf}
\|\widetilde\zeta^{(i)}\|_\infty
:= \max_{1\le a\le n}|\widetilde\zeta^{(i)}(a)|
\le \frac{2^i}{i!(n-i)}.
\end{equation}
  
\end{proposition}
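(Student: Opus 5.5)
The bound follows directly from the closed form \eqref{eq:zeta-tilde-explicit} in Theorem~\ref{lem:explicit-left-M}. Fix $i\in\mathcal{E}_n$ and $1\le a\le n$. By the triangle inequality,
\[
|\widetilde\zeta^{(i)}(a)| \le \frac{1}{i!(n-i)}\left[\binom{i-1}{a-1}+\binom{i-1}{a-1-(n-i)}\right].
\]
It therefore suffices to show that the bracket is at most $2^i$.

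Each binomial coefficient $\binom{i-1}{r}$ is at most $\sum_r\binom{i-1}{r}=2^{i-1}$, since all summands are nonnegative. Hence the bracket is at most $2^{i-1}+2^{i-1}=2^i$, which gives \eqref{eq:left-linf}.

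In fact the two terms cannot both be nonzero when the index ranges are disjoint, that is, when $n-i>i-1$. In that case one even gets $2^{i-1}$, but this refinement is not needed.

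The case $i=0$ needs separate care, because $\binom{-1}{\cdot}$ has no meaning under the paper's conventions. For $i=0$ we already know $\widetilde\zeta^{(0)}=\frac1n(1,\dots,1)$, so $\|\widetilde\zeta^{(0)}\|_\infty=\frac1n=\frac{2^0}{0!(n-0)}$, and the bound holds with equality.

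Alternatively, one could check the $i=0$ case directly from \eqref{eq:tilde-zeta-start}. This is the only point requiring attention; the rest is immediate.
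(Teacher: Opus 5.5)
Your proof is correct and matches the paper's: apply the triangle inequality to the closed form \eqref{eq:zeta-tilde-explicit}, then bound each of the two binomial coefficients by $2^{i-1}$. Your separate treatment of $i=0$, via $\widetilde\zeta^{(0)}=\frac1n(1,\dots,1)$ with equality in the bound, is a small but real improvement: the paper applies the closed form at $i=0$ without comment, even though $\binom{-1}{\cdot}$ is not covered by its stated conventions.
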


\begin{proof}
From \eqref{eq:zeta-tilde-explicit} we have, for each $a$,
\[
|\widetilde\zeta^{(i)}(a)|
\le \frac{1}{i!(n-i)}
     \Bigl(
        \binom{i-1}{a-1}
        + \binom{i-1}{a-1-(n-i)}
     \Bigr).
\]
Each binomial coefficient is at most
$\max_{0\le r\le i-1}\binom{i-1}{r}\le 2^{i-1}$, so
\[
|\widetilde\zeta^{(i)}(a)|
\le \frac{2\cdot 2^{i-1}}{i!(n-i)}
= \frac{2^i}{i!(n-i)},
\]
which gives \eqref{eq:left-linf}.
\end{proof}

\begin{proof}[Proof of Theorem~\ref{thm: Ciucu}]

Using the eigendecomposition of $M^k$ in~\eqref{eqn:M^kExapnsion} and the $\ell_{\infty}$ bound on the left and right eigenvectors of $M$ in Proposition~\ref{prop:right-linf} and Proposition~\ref{prop:left-linf}, we get 
\[\|M^k-\frac{1}{n}J\|_{\infty}\leq 4\sum_{i\in \mathcal{E}_n^{*}} 2^{-ik} \frac{(2n)^i}{n-i}\;.\]
Let $k\geq (1+\epsilon)\log n$ and assume that $n$ is sufficiently large. Then,  
\begin{align*}
 \|M^k-\frac{1}{n}J\|_{\infty}& \leq \frac{8}{n} \sum_{i\in \mathcal{E}_n^{*}, i\leq n/2} \left(\frac{2}{n^{\epsilon}}\right)^{i} + \left(\frac{2}{n^{\epsilon}}\right)^{n/2}\leq Cn^{-1-2\epsilon}\;.
\end{align*}

In particular, for any $j$ we have $|\max_{1\leq i\leq n} M^{(k)}(i, j)-\frac{1}{n}|\leq Cn^{-1-2\epsilon}$. Let $E_{n, k}$ be the optimal score after $k$ many single-shuffles without feedback. Then, 

\[|E_{n, k}-1| \leq \sum_{j=1}^{n}\left|\max_{1\leq i\leq n} M^{(k)}(i, j)-\frac{1}{n}\right|\leq Cn^{-2\epsilon}\;.\]

\end{proof}

\clearpage
\printbibliography

\end{document}